\theoremstyle{plain}
\newtheorem{theorem}{Theorem}
\newtheorem{corollary}{Corollary}
\newtheorem{proposition}[corollary]{Proposition}
\newtheorem{definition}[corollary]{Definition}
\theoremstyle{remark}
\newcommand{\CC}{\mathrm{Cat}}
\newcommand{\FM}{\mathrm{FM}}
\newcommand{\HH}{\mathcal{H}}
\newcommand{\NN}{\mathbb{N}}
\newcommand{\RR}{\mathbb{R}}
\newcommand{\ER}{Erd\H{o}s-Rényi }
\def\presuper#1#2%
\begin{document}

\title{\bf Spectra of large diluted but bushy random graphs}
\author{Nathanaël Enriquez, Laurent Ménard}
\date{}
\maketitle

%%%----------------------------------------------------------------------------------------------------------

\begin{abstract}
We compute an asymptotic expansion in $1/c$ of the limit in $n$ of the empirical spectral measure of the adjacency matrix of an Erd\H{o}s-Rényi random graph with $n$ vertices and parameter $c/n$.  We present two different methods, one of which is valid for the more general setting of locally tree-like graphs.
The second order in the expansion gives some information about the edge of the
spectrum.
\end{abstract}

\noindent{\bf {\textsc MSC 2010 Classification}:}  05C80; 60B20.\\
\noindent{\bf Keywords:} \ER random graphs; random trees; adjacency matrix; random matrices.

\section{Introduction}

It is a consequence of the celebrated result of Wigner \cite{Wigner} that the limit of the empirical spectral measure of the adjacency matrix of a large Erd\H{o}s-Rényi random graph with fixed parameter $p$ is the semi-circle law. This fact remains valid when $p$ is allowed to depend on the size $n$ of the graph, as long as $np \to \infty$.

In the diluted regime, i.e. when  $np$ converges to a constant $c$, it has been proved that the empirical spectral distribution still converges, when properly rescaled, to a probability distribution $\mu^c$ (see the work of Zakharevich \cite{Z06}). However, this measure is far from being well understood. Let us mention two recent  breakthroughs: in \cite{BLS}, Bordenave, Lelarge and Salez computed the mass $\mu^c(\{ 0 \})$, and more recently, Bordenave Sen and Virag proved in \cite{BSV} that $\mu^c$ has a continuous part if and only if $c > 1$.

In the present paper, we focus on the study of $\mu^c$ for large $c$ and
describe how $\mu^c$ differs from the semi-circle law. More precisely, we
compute an asymptotic expansion in $1/c$ of $\mu^c$ (see Theorem \ref{ER} and
\ref{edge} for precise statements). The second order in the expansion gives some
information about the edge of the spectrum (see Section \ref{sec:order2}).

\bigskip

In a famous paper \cite{BS}, Benjamini and Schramm introduced the so-called notion of local convergence for sequences of graphs. In this terminology, \ER graphs of parameter $c/n$ converge locally towards a Galton-Watson tree with Poisson offspring distribution. More generally,
the configuration model introduced by Bollob{\'a}s \cite{Bollobas} gives a
generic construction of random graphs converging locally towards random trees.

Bordenave and Lelarge proved in \cite{BL10} that, for random graphs converging locally to random trees, the expectation of the spectral measure of the limiting tree is the limit of the spectral measures of the random graphs. This allows us to extend the computations we did for \ER random graphs to any sequence of growing random graphs converging locally to a random tree.

\bigskip

Our method is based on the computation of the moments of $\mu^c$ and the underlying combinatorics. An interesting aspect of the present work is that this combinatorics takes a very different form depending on whether the computations are made directly on the whole finite graph --- as we do in the special case of \ER random graphs --- or on their local limit.

On the basis of former works of Khorunzhyi, Shcherbina and Vengerovsky \cite{KSV04}, and also Bordenave, Lelarge and Salez \cite{BL10,BLS}, it is natural to try to use the resolvent method for our computations. We show in the appendix that this method faces some serious problems since it involves, during intermediate computations, quantities that are strongly diverging.

\section{Spectral measure of the Erd\H{o}s-Rényi random graph} \label{sec:ER}

Let $X_n$ be the adjacency matrix of the \ER random graph $G(n,c/n)$. It is a symmetric $n \times n$ random matrix having a null diagonal and whose entries above the diagonal are i.i.d. Bernoulli random variables with parameter $c/n$. We define the normalised spectral measure of $X_n$ by
\[
\mu_n^{c} = \frac{1}{n} \sum_{\lambda \in \textrm{Sp} \left( c^{-1/2} X_n \right)} \delta_{\lambda}.
\]
As in the Gaussian Unitary Ensemble, we rescale $X_n$ by $c^{-1/2}$ so that the variance of off diagonal entries is asymptotically equal to $1/n$ . Indeed, if $i \neq j$:
\[
\mathrm{E}  [ X_n ^2] = \frac{c}{n}.
\]
It is of common knowledge (see e.g. \cite{KSV04,BL10,Z06}) that
\begin{itemize}
\item the sequence $\left( \mu_n^{c} \right)_{n \geq}$ converges weakly to a probability measure $\mu^c$ as $n \to \infty$.
\item when $c \to \infty$, the measure $\mu^c$ converges to Wigner's semi-circle law.
\end{itemize}
Our first result gives an asymptotic expansion of $\mu^c$ as $c \to \infty$:
\begin{theorem}
\label{ER}
For a measure $\mu$, denote $m_k(\mu)$ the moment of order $k$ of $\mu$ when it exists  (i.e. when $\int |x|^k
| d \mu (x) |
 < \infty $).
 One has, for every $k \geq 0$ and as $c \to \infty$
\[
m_k(\mu^c) = m_k \left( \sigma + \frac 1 c \sigma^{\{1\}} \right) + o \left( \frac 1 c \right)
\]
where $\sigma$ is the semi-circle law having density
\[ 
\frac{1}{2\pi} \sqrt{4-x^2} \mathbf{1}_{|x|<2}
\] 
and $\sigma^{\{1\}}$ is a measure with total mass $0$ and density
\[
\frac{1}{2 \pi} \frac{x^4 -4x^2 +2}{\sqrt{4-x^2}} \,
\mathbf{1}_{|x|<2}.
\]
\end{theorem}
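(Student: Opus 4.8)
The plan is to use the method of moments together with a combinatorial analysis of closed walks on trees, working with $\mu^c$ directly (i.e. after the limit in $n$ has been taken).

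\emph{Step 1: from traces to walks on trees.} Starting from $m_k(\mu_n^c)=\frac{1}{n c^{k/2}}\E\,\mathrm{Tr}(X_n^k)=\frac{1}{n c^{k/2}}\sum_{i_1,\dots,i_k}\E\big[(X_n)_{i_1i_2}\cdots(X_n)_{i_ki_1}\big]$, I expand over closed walks $i_1\to\cdots\to i_k\to i_1$ in the complete graph on $[n]$. A term vanishes unless consecutive indices differ, and then the expectation factorises over the distinct edges traversed; since the entries are Bernoulli$(c/n)$ one has $(X_n)_e^m=(X_n)_e$ for all $m\ge1$, so a walk visiting $v$ distinct vertices and $e$ distinct edges contributes $n(n-1)\cdots(n-v+1)(c/n)^e$. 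Grouping walks by their \emph{shape} (the pattern of coincidences among the $i_a$'s), noting that the edge-multigraph of a walk is connected so that $v\le e+1$, and letting $n\to\infty$ (uniform boundedness of all moments gives the required uniform integrability), only shapes with $v=e+1$, i.e. those whose edge support is a tree, survive, each with weight $c^{e}$ after the factor $c^{-k/2}$. A closed walk on a tree has even length and traverses each edge an even number of times, so $m_k(\mu^c)=0$ for $k$ odd, while for $k=2\ell\ge2$ one obtains the \emph{exact} identity $m_{2\ell}(\mu^c)=c^{-\ell}\sum_{e=1}^{\ell}N_{2\ell,e}\,c^{e}$, where $N_{2\ell,e}$ is the number of closed-walk shapes of length $2\ell$ whose support is a tree with $e$ edges.

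\emph{Step 2: reading off the first two orders.} The right-hand side is a polynomial in $1/c$, so in fact $m_{2\ell}(\mu^c)=N_{2\ell,\ell}+\tfrac1c N_{2\ell,\ell-1}+O(1/c^2)$, the tail being trivially $o(1/c)$. When $e=\ell$ every edge is traversed exactly twice, and such walks are the contour walks of plane trees with $\ell$ edges, hence $N_{2\ell,\ell}=C_\ell=m_{2\ell}(\sigma)$; this already yields $\mu^c\to\sigma$. When $e=\ell-1$ the edge multiplicities are $(4,2,\dots,2)$: exactly one edge is traversed four times. The heart of the proof is the identity $N_{2\ell,\ell-1}=\binom{2\ell}{\ell-2}$.

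\emph{Step 3: the combinatorics of one quadruple edge.} To establish it I decompose a tree-supported closed walk rooted at $\rho$ into its maximal excursions away from $\rho$, grouped by the child of $\rho$ from which each excursion departs. A child may be visited several times (each extra visit raises the excess $j:=\ell-e$), and one must track how the successive explorations of a given subtree overlap; this produces a functional equation for $W(z,u)=\sum_{\ell,e}N_{2\ell,e}z^{\ell}u^{\ell-e}$. Its $u^0$-part recovers the Catalan series $T(z)=\sum_\ell C_\ell z^\ell$ with $T=1+zT^2$, and its $u^1$-part simplifies (using that, to this order, the relevant subtree explorations are edge-disjoint Dyck explorations) to $\sum_{\ell\ge0}N_{2\ell,\ell-1}z^{\ell}=z^{2}T(z)^{5}/(2-T(z))$; combined with $\sum_{n\ge0}\binom{2n+4}{n}z^n=T(z)^4/\sqrt{1-4z}$ and $\sqrt{1-4z}=(2-T)/T$, this gives the coefficients $\binom{2\ell}{\ell-2}$. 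I expect this step (organising the walks with a quadruply-traversed edge, and keeping the bookkeeping of repeated subtree visits correct, or else exhibiting a direct bijection onto the $\binom{2\ell}{\ell-2}$ objects) to be the main obstacle; the rest is routine.

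\emph{Step 4: matching the density.} It remains to check $\binom{2\ell}{\ell-2}=m_{2\ell}(\sigma^{\{1\}})$. Putting $x=2\cos\theta$ one recognises $x^4-4x^2+2=2\cos4\theta=2T_4(x/2)$, so $\sigma^{\{1\}}$ has density $\tfrac1\pi T_4(x/2)(4-x^2)^{-1/2}\ind_{|x|<2}$, an integrable signed density, and $m_k(\sigma^{\{1\}})=\tfrac{2^{k}}{\pi}\int_0^{\pi}\cos^{k}\theta\,\cos4\theta\,\d\theta$, which is $0$ for $k$ odd and for $k=2\ell<4$ (in particular the total mass is $0$), and equals $\binom{2\ell}{\ell-2}$ for $k=2\ell\ge4$ by expanding $\cos^{2\ell}\theta$ in multiples of $\cos2r\theta$. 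Assembling Steps 1--4 gives $m_{2\ell}(\mu^c)=C_\ell+\tfrac1c\binom{2\ell}{\ell-2}+O(1/c^2)$ and $m_k(\mu^c)=0$ for odd $k$, i.e. $m_k(\mu^c)=m_k\!\big(\sigma+\tfrac1c\sigma^{\{1\}}\big)+o(1/c)$, as claimed.
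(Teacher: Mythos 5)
Your proposal is correct in all of its announced formulas and follows the same moment-method skeleton as the paper (Steps 1--2 are the paper's argument verbatim, including the observation that only tree-supported walk shapes survive as $n\to\infty$; your remark that the limiting moments are then \emph{exact} polynomials in $1/c$, so that $o(1/c)$ is really $O(1/c^2)$, is a nice strengthening that the paper exploits only later, in Proposition \ref{proporder2}). The genuine differences are in Steps 3 and 4. For the count of walks with one quadruply-traversed edge, the paper does not decompose at the root: it cuts the walk at the four traversals of the repeated edge $\{i,j\}$, writing it as $j_1 \mathbf{S}_1 i\, j\mathbf{S}_2 j\, i\mathbf{S}_3 i\, j\mathbf{S}_4 j\, i\mathbf{S}_5 j_1$, i.e.\ a plane tree with $p_1$ edges carrying a marked corner into which three further plane trees are grafted; this gives the closed count $\sum_{p_1+\cdots+p_4=k-2}(2p_1+1)\CC(p_1)\cdots\CC(p_4)$ with generating function $x^2\left(2xT'T^3+T^4\right)$, which does equal your $x^2T^5/(2-T)$ (use $xT^2=T-1$ and $T'=T^3/(2-T)$), hence $N_{2\ell,\ell-1}=\binom{2\ell}{\ell-2}$ via $z^2T^4/\sqrt{1-4z}$. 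Your root-excursion decomposition is instead essentially the recursion the paper sets up in Section 3 for unimodular Galton--Watson trees (the equations for $L_k^{(0)}$, $L_k^{(1)}$ and $\presuper{2}{L}_k^{(0)}$), where it is carried out in full and yields $B^{(1)}(x)=x^2T^5/(1-xT^2)=x^2T^5/(2-T)$ --- the same series. So the one place where your write-up is not yet a proof, and which you rightly flag as the main obstacle, is exactly the functional equation for $W(z,u)$ and the extraction of its $u^1$-part: you state the answer without deriving the equation (the delicate point being that two excursions may re-enter the same subtree, and that the two sub-explorations below the doubled edge must be edge-disjoint, whence the factor $T^2$ and the $\binom{l+1}{2}$-type weights). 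That bookkeeping is correct and is done in the paper; alternatively the marked-corner decomposition avoids it entirely and is shorter for this first-order term. Finally, for Step 4 the paper goes through the Stieltjes transform ($\HH^{\{1\}}=-\HH'\HH^3$) and reads off the density, whereas you check directly that the announced density has moments $\binom{2\ell}{\ell-2}$ via $x=2\cos\theta$ and $x^4-4x^2+2=2\cos 4\theta$; both are valid, and your explicit binomial form of the correction to the Catalan numbers is left implicit in the paper.
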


\begin{proof}
Let us compute the moments of $\mu_n^{c}$. Let $l$ be an integer, 
\begin{align}
m_l \left(\mu_n^{c} \right) & =  E \left[ \int x^l d \mu_n^{c} \right] = \frac{1}{n} E \left[ \textrm{Tr} \left( \frac{1}{\sqrt{c}} X_n \right)^l \right] = \frac{1}{n c^{l/2}} \sum_{1 \leq j_1, \ldots , j_l \leq n} E \left[ X_n (j_1,j_2) \cdots X_n(j_l,j_1) \right]. \label{deftrace}
\end{align}

First, let us prove that odd moments converge to $0$ as $n$ goes to infinity. For this purpose, we notice the useful following fact : in the sum \eqref{deftrace}, the contribution of all the sequences $j_1, \ldots , j_l$ where a pair $\{m,m'\} $ appears an odd number of times among the pairs of the form $\{j_i , j_{i+1} \}$ goes to $0$ as $n$ goes to infinity.

Indeed, since $E [ X_n(i,j)^{l} ] = \frac{c}{n}$ for all $l,i$ and $j$, if we fix a sequence $j_1, \ldots , j_l$, then 
$$
E \left[ X_n (j_1,j_2) \cdots X_n(j_l,j_1) \right] = \left( \frac{c}{n} \right) ^{a(j_1, \ldots , j_l)}
$$
where $a(j_1, \ldots , j_l)$ denotes the number of different pairs of the form  $\{j_i , j_{i+1} \}$.
Let $s(j_1, \ldots , j_l)$ be the number of distinct integers in the sequence $j_1, \ldots , j_l$. The contribution in \eqref{deftrace} of all sequences $j_1, \ldots , j_l$ such that $a(j_1, \ldots , j_l) = \alpha$ and such that $s(j_1, \ldots , j_l) = \zeta$ is then smaller than $\frac{1}{n c ^{l/2}} \left( \frac{c}{n} \right)^{\alpha} n^\zeta$. A non null asymptotic contribution arise only for $\zeta = \alpha + 1$.

Note now that a sequence $j_1, \ldots , j_l$ defines a connected graph whose vertex set is $\{j_1, \ldots , j_l\}$ and whose edges are the pairs $\{j_i , j_{i+1} \}$. This graph has $s(j_1, \ldots , j_l)$ vertices and $a(j_1, \ldots , j_l)$ edges and is therefore a tree when $s=a+1$.

The sequence $j_1, \ldots , j_l, j_1$ is then a closed path of length $l$ on this tree and must therefore be of even length.

\bigskip

From now on, we consider even moments so that $l = 2k$. Let us take a closer look at the  sequences $j_1, \ldots , j_{2k}$ such that $a(j_1, \ldots , j_{2k}) = \alpha$ and $s(j_1, \ldots , j_{2k}) = \alpha + 1$ for fixed $\alpha$. When $n$ goes to infinity, their contribution is equal to $c^{\alpha - k }$ multiplied by the number of closed paths of length $2k$ on trees with $\alpha$ edges, with the constraints that a path starts and ends at the root of the tree and visits every vertex.

Based on this observation, we can study the asymptotic expansion for large $c$ of the asymptotic (in $n$) spectral measure of $X_n$ via its moments. The mean value in this expansion comes from the special case $\alpha = k$. Each sequence $j_1, \ldots , j_{2k},j_1$ such that $a(j_1, \ldots , j_{2k}) = k$, $s(j_1, \ldots , j_{2k}) = k + 1$  is then the countour function of a tree with $k$ edges. The total contribution of these sequences is therefore the $k$-th Catalan number $\CC(k)$. This explains that when $c$ goes to infinity, the asymptotic (in $n$) spectral measure of the random graph is close to Wigner's semi-circle law.

The next term in the asymptotic expansion is of order $1/c$ and comes from the case $\alpha = k -1$. Whereas in the previous case each edge of the tree was visited exactly twice, in this case, exactly one edge is visited four times and all the other edges are visited twice. Therefore we have to enumerate sequences of the form 
$$\mathcal{S} = j_1 \mathbf{S}_1 i \,  j \mathbf{S}_2  j  \, i \mathbf{S}_3 i \, j \mathbf{S}_4 j \, i \mathbf{S}_5 j_1 $$
such that
\begin{itemize}
\item the sequences $\mathbf{S}_1, \ldots ,\mathbf{S}_5$ have no common term and do not contain any of the three integers $i$ or $j$. In addition, the sequences $\mathbf{S}_2$, $\mathbf{S}_3$ and $\mathbf{S}_4$ do not contain $j_1$
\item the sequences $j_1 \mathbf{S}_1 i \mathbf{S}_5 j_1$, $j \mathbf{S}_2 j$, $i \mathbf{S}_3 i$ and $j \mathbf{S}_4 j$ are the contour functions of trees with respectively $p_1$, $p_2$, $p_3$ and $p_4$ edges satisfying $p_1 + p_2 + p_3 + p_4 = k -2$;
\end{itemize}
up to the specific values of the integers appearing in the whole sequence.

The sequence $j_1 \mathbf{S}_1 i \mathbf{S}_5 j_1$ corresponds to a rooted tree
with $p_1$ edges and a marked corner (adjacent to the vertex $i$) where the
trees corresponding to the three other sequences are inserted (see Figure
\ref{ERorder1_fig} for an illustration). Note that when $p_1 = 0$, the sequence
$\mathcal{S}$ boils down to $i \, j \mathbf{S}_2 j  \, i \mathbf{S}_3 i \, j
\mathbf{S}_4 j \, i$.

\begin{figure}[t!]
\begin{center}
\includegraphics[width=5cm]{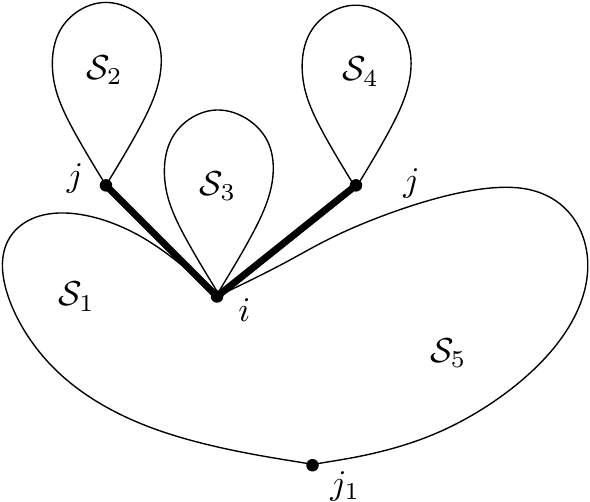}
\caption{\label{ERorder1_fig} Labeled tree with contour $\mathcal{S}$.}
\end{center}
\end{figure}

There are $(2p_1 +1) \CC(p_1)$ such trees with a marked corner. The term of order $1/c$ in the asymptotic expansion of the moment $m_{2k}$ is then:
\[
m_{2k}^{\{1\}} = \sum_{p_1 +\cdots + p_4 = k-2} (2p_1 +1) \CC(p_1) \cdots \CC(p_4).
\]

The generating function $S^{\{1\}}$ of the sequence $\left( m_{2k}^{\{1\}} \right)_{k \geq 0}$ is given by
\begin{align*}
S^{\{1\}} (x) & = \sum_{k \geq 0} m_{2k}^{\{1\}} x^k  = \sum_{k \geq 0} x^2 \sum_{p_1 +\cdots + p_4 = k-2}  (2p_1 +1) \CC(p_1) x^{p_1} \cdots \CC(p_4) x^{p_4} \\
& = x^2 \left( 2x T'(x) T(x)^3 + T(x)^4\right)
\end{align*}
where $T$ is the generating function of Catalan numbers:
$T(x) = \frac{1 - \sqrt{1-4x} } {2x}$ for $|x| < 1/4$.

We show now that the sequence $\left( m_{l}^{\{0\}} + \frac1c m_{l}^{\{1\}} \right)_{l \geq 0}$ is the sequence of the moments of the probability measure announced in the theorem. A formal proof would consist in the computation of the moments of this measure, but we will rather show how to compute this density from the moments via the Stieljes transform. Define
\begin{align*}
\HH^{\{1\}} (z) & = \frac1z S^{\{1\}} \left( \frac{1}{z^2}\right).
\end{align*}
One has
\begin{align*}
\HH^{\{1\}} (z)
= \frac{2}{z^7} T' \left(\frac{1}{z^2} \right) T \left( \frac{1}{z^2} \right)^3 + \frac{1}{z^5} T \left( \frac{1}{z^2} \right)^4 = - \HH'(z) \HH(z)^3
\end{align*}
where
$\HH(z) = \frac1z T \left( \frac{1}{z^2} \right) = \frac12 \left( z - \sqrt{z^2 -4} \right)$
is the Stieljes transform of the semi-circle law $\sigma$ . Therefore
\begin{equation}
\label{H1}
\HH^{\{1\}}(z) = \frac{1}{16} \frac{\left( z -
\sqrt{z^2-4}\right)^4}{\sqrt{z^2-4}}.
\end{equation}
This corresponds to the Stieljes transform of a signed measure $\sigma^{\{1\}}$ with
null total mass and having density
\[
 \lim_{\varepsilon \to 0} - \frac1\pi \, \mathrm{Im}\left( \HH^{\{1\}}(x+i
\varepsilon) \right) = \frac{1}{2 \pi} \frac{x^4 -4x^2 +2}{\sqrt{4-x^2}} \,
\mathbf{1}_{|x|<2}.
\]
\end{proof}

\section{Spectral measure of Unimodular Galton Watson trees}

\subsection{Asymptotic expansion of the spectral measure}

\ER random graphs with parameter $c/n$ are known to converge locally
towards Galton Watson trees with a Poisson reproduction law. The aim of this
section is to extend our computation to the so-called "locally tree-like
graphs", whose local limit are Unimodular Galton Watson trees \cite{AL07}. The
classical setting providing such limits is the configuration model we now
present (see \cite{Bobook} Section 2.4 for details).

Let $\mathbf{p}(c) = (p_k(c))_{k \geq 0}$ be a probability measure on $\NN$ with finite mean $c$. We can construct a random graph with $n$ vertices associated to $\mathbf{p}(c)$ by the following procedure.
In a first step, we choose a sequence $\left( d_i \right)_{i=1\ldots n}$ of i.i.d. random variables with distribution $\mathbf{p}(c)$.
In a second step, for every $i$ between $1$ and $n$, we make $d_i$ half-edges start from vertex $i$. Assuming the sum of the $d_i$'s is even we can connect the half edges by pairs and obtain a graph (possibly with loops and multiple edges). If the sum  of the $d_i$'s is not even, we increase the degree $d_n$ by $1$ as it will not change the local limits of the graphs.
In a third step, we choose a graph uniformly at random among all the graphs
obtained by connecting the half edges by pairs. In order to obtain a simple
graph we erase self-loops and merge multiple edges. We denote by
$G_n(\mathbf{p}(c))$ the random graph obtained by this device.

Viewed from a uniformly chosen vertex $\rho$, these graphs are
known to converge locally as their number of vertices grows to infinity towards
the Unimodular Galton Watson tree $UGW(\mathbf p (c))$ defined as follows.
The root of $UGW(\mathbf p (c))$ has a random number of children distributed according to $\mathbf{p}(c)$. Other vertices have independent numbers of children distributed according to the size biased version of $\mathbf{p}(c)$, namely the probability $\mathbf q (c)$ with weight sequence $q_k(c) = (k+1) p_{k+1}(c) / c $.

\bigskip

As we show in the following (see \cite{BL10} for details), the local  limit
$UGW(\mathbf p (c))$ contains all the material to identify the limiting spectral
measure of the initial graphs $G_n(\mathbf p (c))$. 
Denote the adjacency matrix of $G_n(\mathbf p (c))$ by $A_n(\mathbf p (c))$ and
consider the normalised spectral measures of $G_n(\mathbf p (c))$:
\[
\mu_n (\mathbf p (c))= \frac{1}{n} \sum_{\lambda \in \textrm{Sp} \left( c^{-1/2} A_n (\mathbf p (c))\right)} \delta_{\lambda}.
\]
One has
\begin{align}
& E  \left[ m_k (\mu_n(\mathbf p (c))) \right] \notag \\
& =  E \left[ \int x^k d
\mu_n(\mathbf p (c)) \right] \notag \\
& = \frac{1}{n} E \left[ \textrm{Tr} \left( \frac{1}{\sqrt{c}} A_n (\mathbf p (c))\right)^k \right] \notag \\
& = \frac{1}{c^{k/2}} E \left[ \frac1n \times \text{number of loops of length $k$ in $G_n(\mathbf p (c))$} \right] \notag \\
& = \frac{1}{c^{k/2}} E \left[ \text{number of loops started at $\rho$ and of length $k$ in $G_n(\mathbf p (c))$} \right] \notag \\
& \underset{n \to \infty}{\longrightarrow} \frac{1}{c^{k/2}} E \left[ \text{number of loops started at the root and of length $k$ in $UGW(\mathbf p (c))$} \right].
\label{CVloc}
\end{align}
If $T$ is a random rooted tree (finite or infinite), a loop inside $T$ is of
even length, therefore we define, for every $k \geq 0$,
\begin{equation*}
L_k(T) = E \left[ \text{number of loops started at the root and of length $2k$ in $T$} \right]
\end{equation*}
so that \eqref{CVloc} can be written as
\begin{align}
E & \left[ m_{2k +1}  (\mu_n(\mathbf p (c))) \right] 
 \underset{n \to \infty}{\longrightarrow} 0 \notag \\
E & \left[ m_{2k} (\mu_n(\mathbf p (c))) \right] 
\underset{n \to \infty}{\longrightarrow} \frac{1}{c^{k}} L_k \left( UGW ( \mathbf p (c) ) \right).
\label{CVloc2}
\end{align}
Up to the factor $c^{-k}$, the right hand side of \eqref{CVloc2} is the $2k$-th
moment of the spectral measure $\mu \left( UGW (\mathbf p (c)) \right)$ of
$UGW(\mathbf p (c))$ defined in \cite{BL10} (this paper also states that
$\mathbf p (c)$ must have a finite variance for this measure to be defined and
characterised by its moments, which will be the case in the following). As in
the previous section, we are interested in the asymptotic expansion of this
measure as $c \to \infty$. In the sequel, we
assume that the distribution $\mathbf{p} (c)$ is \emph{concentrated} around its
mean $c$. 
More precisely, we make the following assumption on the
factorial moments of $\mathbf p (c)$: there exists $\alpha > 0$ and a function $f : \NN
\to \RR$  such that for every $k \in \NN$ one has, when $c \to \infty$
\begin{equation}
\frac{\FM_k(\mathbf p (c))}{c^k} := \frac{E_{\mathbf{p}(c)}\left[X(X-1) \ldots
(X-k+1) \right]}{c^k}  = 1 + \frac{f(k)}{c^{\alpha}} + o \left( \frac 1
{c^{\alpha}} \right).
\label{DLcum}
\end{equation}
Note that this condition implies that $\mathbf p (c)$ has finite moments of all orders and that $f(0) = f(1) = 0$.

\begin{theorem}
\label{UGW}
Let $\left( \mathbf{p}(c) \right)_{c \geqslant 0} = \left( (p_k(c))_{k \geq 0}
\right)_{c \geqslant 0}$ be a family of probability measures on $\NN$ with
finite mean $c$ satisfying \eqref{DLcum}. One has, for every $k \geq 0$ and as
$c \to \infty$
\[
m_k(\mu(\mathbf p (c)) ) = m_k \left(\sigma + \frac 1 {c^{\alpha}}  \sigma_f^{\{1\}}  + \frac 1 c \sigma^{\{1\}}  \right)  + o \left( \frac 1 {c^{1 \wedge \alpha}} \right)
\]
where $\sigma$ and $\sigma^{\{ 1\}}$ are defined in Theorem \ref{ER} and $\sigma_f^{\{1\}}$ is a measure of total mass $0$ whose moments generating function is given by
\[
\frac{2 F(x T(x))}{ 1 - x T(x)^2}
\]
where $T$ is the generating series of Catalan numbers and $F(x) = \sum_{k \geq
0} f(k) x^k$.
\end{theorem}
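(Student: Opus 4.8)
The plan is to start from \eqref{CVloc2}, which reduces the statement to the asymptotic expansion, for each fixed $k$, of $c^{-k}L_k\big(UGW(\mathbf p(c))\big)$, where $L_k$ is the expected number of closed walks of length $2k$ from the root. I would organise these closed walks according to the subtree $S\ni\rho$ they explore (the union of the edges they traverse). Every edge of $S$ is used an even number of times, so if $S$ has $j$ edges then $j\le k$; moreover $j=k$ forces every edge of $S$ to be used exactly twice, and $j=k-1$ forces exactly one edge to be used four times and all the others twice. After dividing by $c^k$, the contributions coming from $j\le k-2$ will be $O(c^{-2})=o(c^{-1\wedge\alpha})$ and can be discarded. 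So the whole computation splits into: (i) counting the admissible walks on a given abstract shape; and (ii) counting the expected number of embeddings of that shape into $UGW(\mathbf p(c))$.

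For (ii), since the subtrees hanging from the root of $UGW(\mathbf p(c))$ are i.i.d.\ $GW(\mathbf q(c))$ trees and we are counting maps rather than isomorphism classes, a one-line recursion over the root shows that the expected number of embeddings of a plane tree $\tau$ as a plane subtree rooted at the root equals $\FM_{\deg^{+}(\rho)}(\mathbf p(c))\prod_{v\ne\rho}\FM_{\deg^{+}(v)}(\mathbf q(c))$, where $\deg^{+}$ denotes the number of children. The size-bias relation $q_k(c)=(k+1)p_{k+1}(c)/c$ gives at once $\FM_j(\mathbf q(c))=\FM_{j+1}(\mathbf p(c))/c$, so by \eqref{DLcum} each internal factor is $c^{\deg^{+}(v)}\big(1+f(\deg^{+}(v)+1)c^{-\alpha}+o(c^{-\alpha})\big)$ and the root factor is $c^{\deg^{+}(\rho)}\big(1+f(\deg^{+}(\rho))c^{-\alpha}+o(c^{-\alpha})\big)$. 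Multiplying and using $\sum_v\deg^{+}(v)=|E(\tau)|$, the embedding count of a $\tau$ with $k$ edges is $c^{k}\big(1+\Delta_f(\tau)c^{-\alpha}+o(c^{-\alpha})\big)$ with $\Delta_f(\tau)=f(\deg^{+}(\rho))+\sum_{v\ne\rho}f(\deg^{+}(v)+1)$, and a $\tau$ with $k-1$ edges has embedding count $c^{k-1}(1+O(c^{-\alpha}))$. Since only finitely many shapes occur for fixed $k$, these estimates are uniform enough to be summed.

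For (i), closed walks of length $2k$ using every edge of their $k$-edge support exactly twice are precisely the contours of plane trees with $k$ edges, whose number is $\CC(k)$; this reproduces the semi-circle moments at leading order. Closed walks on a tree with $k-1$ edges using one edge four times are exactly those enumerated in the proof of Theorem~\ref{ER}, whose total number is $m_{2k}^{\{1\}}$. Combining (i) and (ii) and recalling that odd moments vanish, one gets
\[
m_{2k}(\mu(\mathbf p(c)))=c^{-k}L_k=\CC(k)+\frac{1}{c^{\alpha}}\sum_{\tau\,:\,|E(\tau)|=k}\!\!\Delta_f(\tau)+\frac{1}{c}\,m_{2k}^{\{1\}}+o\!\left(\frac{1}{c^{1\wedge\alpha}}\right),
\]
the sum being over plane trees. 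Since $\CC(k)=m_{2k}(\sigma)$ and $m_{2k}^{\{1\}}=m_{2k}(\sigma^{\{1\}})$, it only remains to identify the moment generating function of $m_{2k}^{\{f\}}:=\sum_{|E(\tau)|=k}\Delta_f(\tau)$.

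Here I would split $\Delta_f(\tau)$ into the root term and the rest. Summing $x^{|E(\tau)|}f(\deg^{+}(\rho))$ over plane trees, with a root of degree $r$ contributing $(xT(x))^{r}$, gives $F(xT(x))$. For the non-root terms, one marks a non-root vertex $v$, cuts the edge above it, and writes $\tau$ as a plane tree with a marked corner — of generating function $\sum_m(2m+1)\CC(m)x^m=T(x)+2xT'(x)$ — into which one reattaches, through one new edge, a plane tree rooted at $v$ weighted by $f(\deg^{+}(v)+1)$; since $f(0)=0$, the latter series is $F(xT(x))/(xT(x))$. Hence $m_{2k}^{\{f\}}$ has generating function $F(xT(x))\big(1+(T(x)+2xT'(x))/T(x)\big)=2\big(T(x)+xT'(x)\big)F(xT(x))/T(x)$, and the Catalan identities $T=1+xT^{2}$ and $T'=T^{2}/(1-2xT)$ collapse $2(T+xT')/T$ to $2/(1-xT^{2})$, giving the announced $\dfrac{2F(xT(x))}{1-xT(x)^{2}}$; the associated total-mass-zero (symmetric) measure $\sigma_f^{\{1\}}$ is then recovered from its Stieltjes transform exactly as at the end of the proof of Theorem~\ref{ER}. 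The main obstacle I anticipate is the combinatorial bookkeeping of paragraphs two to four: pinning down the plane-tree embedding count with no stray automorphism factors and with $\mathbf q$ versus $\mathbf p$ in the right places, and arranging the marked-vertex/marked-corner decomposition so that the Catalan functional equation produces the stated closed form.
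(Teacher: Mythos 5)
Your argument is correct, but it reaches the generating function $2F(xT(x))/(1-xT(x)^2)$ by a genuinely different route from the paper. The paper never sums over shapes: it writes a closed system of recursions, obtained by decomposing a loop at the root according to the number $l$ of first-generation edges it uses, for $L_k^{(0)}$, $L_k^{(1)}$ and $\presuper{2}{L}_k^{(0)}$ in both $UGW(\mathbf p(c))$ and $GW(\mathbf q(c))$ (equations \eqref{Lk0}--\eqref{Lk1GW} and \eqref{2Lk0}), translates these into functional equations for the generating functions $B^{(0)},\tilde B^{(0)},B^{(1)},\tilde B^{(1)},\presuper{2}{\tilde A}^{(0)}$, and solves them; the factor $2$ emerges there from $T=1+xT^2$ applied to $B^{(0)}=F(xT)+x\tilde B^{(0)}T^2$. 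You instead enumerate plane-tree walk shapes globally and weight each by the expected number of ordered embeddings, $\FM_{\deg^{+}(\rho)}(\mathbf p(c))\prod_{v\neq\rho}\FM_{\deg^{+}(v)}(\mathbf q(c))$ --- a correct product formula (the factors $n(n-1)\cdots(n-l+1)$ of the paper, iterated over generations) which linearizes the $c^{-\alpha}$ correction into the per-vertex sum $\Delta_f(\tau)$ --- and then compute $\sum_\tau\Delta_f(\tau)x^{|E(\tau)|}$ by a marked-vertex/marked-corner decomposition. Your Catalan manipulations check out: $(1-xT)(1-xT^2)=1-2xT$ does collapse $2(T+xT')/T$ to $2/(1-xT^2)$. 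Importing $m_{2k}^{\{1\}}$ from Theorem \ref{ER} for the $1/c$ term is also legitimate, since the walk-shape count there is exactly the relevant one and its generating series $x^2(2xT'T^3+T^4)$ coincides with the paper's $x^2T^5/(1-xT^2)$ via $T=1+xT^2$, so the Stieltjes identification with $\sigma^{\{1\}}$ carries over verbatim. What each approach buys: yours is more transparent at first order and unifies the direct Erd\H{o}s-R\'enyi combinatorics of Section \ref{sec:ER} with the local-limit setting (a dichotomy the paper explicitly comments on); the paper's recursive scheme is the one that extends mechanically to higher orders, as in Proposition \ref{proporder2}, where a hand enumeration of shapes with two extra edge repetitions and of the disjointness constraints between excursions would be considerably more painful, and it keeps the bookkeeping of $\mathbf p$ versus the size-biased $\mathbf q$ automatic.
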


\begin{proof}
Let us focus on the right hand side of \eqref{CVloc2}.  As with \ER
random graphs, the key to obtain an asymptotic expansion of $c^{-k} L_{k} \left(
UGW ( \mathbf p (c) \right)$ when $c \to \infty$ resides in the fact that the
contribution of loops with repeated edges is of smaller order than the
contribution of loops with no repeated edges. The notion of loops with repeated
edges will be instrumental in the following, therefore we introduce the 
following notation.
\begin{definition}
Given a rooted tree,  we denote
\begin{itemize}
\item by $0$-loops the loops started at the root and visiting each edge of the
tree either twice (a first time from the root and a second time towards the
root) or not at all;
\item by $1$-loops the loops started at the root and visiting each edge of the
tree either twice (a first time from the root and a second time towards the
root) or not at all with the exception of one edge visited four times.
\end{itemize}
Furthermore, if $T$ is a random tree, we denote by $L_{k}^{(0)} (T)$ the expected number of $0$-loops in $T$ of length $2k$ and $L_{k}^{(1)} (T)$ the expected number of $1$-loops in $T$ of length $2k$
\end{definition}

\begin{figure}[t!]
\begin{center}
\includegraphics[width=12cm]{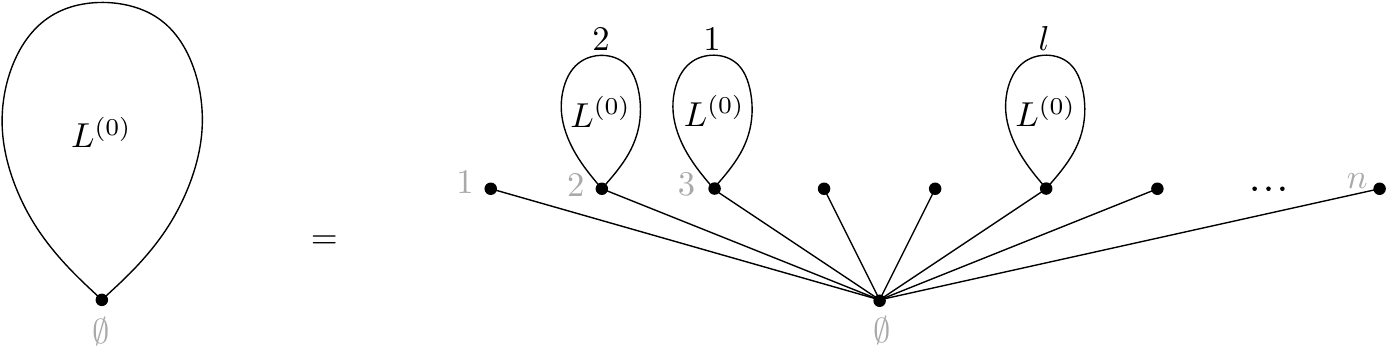}
\caption[0-loops]{\label{fig:0loops} Illustration of \eqref{Lk0} satisfied by
$L_{k}^{(0)} \left( UGW ( \mathbf p (c)) \right)$.
Numbers in gray next to vertices
count the children of the root, while numbers in black give the order of
appearance of first generation vertices in the loop.
}

\end{center}
\end{figure}

Let us start by studying $ L_{k}^{(0)} \left( UGW ( \mathbf p (c)) \right)$. A
$0$-loop can be decomposed into a sequence of visits of $l \in \{1, \ldots , k
\} $ distinct edges joining the root to a first generation vertex, each of these
visits being followed by a $0$-loop in the subtree of the descendants of the
associated first generation vertex. When $T$ is the tree $UGW \left( \mathbf p
(c)) \right) $, the root has $n$ children with probability $p_n (c) $ and the
subtrees of the descendants of these $n$ children are independant Galton Watson
trees with reproduction law $\mathbf q (c)$ denoted $GW \left( \mathbf q (c)
\right)$. See Figure \ref{fig:0loops} for an illustration. The choice of a
sequence of $l$ distinct first generation vertices among the $n$ children of the
root gives a factor $n (n-1) \ldots (n-l+1)$
This yields the following recursion equation for $k > 0$
\begin{align}
& c^{-k} L_{k}^{(0)} \left( UGW ( \mathbf p (c)) \right) \notag\\
& =
\sum_{l=1}^{k} c^{-l} \left( \sum_{n \geq l} n (n-1) \cdots (n- l +1) p_n(c) \right) \notag \\
& \qquad \qquad \times \sum_{k_1 + \cdots + k_l = k-l} c^{-k_1} L_{k_1}^{(0)} \left( GW ( \mathbf q (c)) \right) \times \cdots \times
c^{-k_l} L_{k_l}^{(0)} \left( GW ( \mathbf q (c)) \right) \notag \\
& = \sum_{l=1}^{k} \frac{\FM_l(\mathbf p (c))}{c^l} \sum_{k_1 + \cdots + k_l = k-l} c^{-k_1} L_{k_1}^{(0)} \left( GW ( \mathbf q (c)) \right) \times \cdots \times
c^{-k_l} L_{k_l}^{(0)} \left( GW ( \mathbf q (c)) \right).
\label{Lk0}
\end{align}
Note that in the above equation, the factors $c^{-k}$ play no combinatorial role and can be dropped resulting in a recursion relation for the $L_k$'s. However, we keep them in the formula since we have to deal with the moments of the spectral measure $c^{-k} L_k$. Similarly
\begin{align}
& c^{-k} L_{k}^{(0)} \left( GW ( \mathbf q (c) )\right) \notag \\
& =\sum_{l=1}^{k} \frac{\FM_{l}(\mathbf q (c))}{c^{l}} \sum_{k_1 + \cdots + k_l = k-l} c^{-k_1} L_{k_1}^{(0)} \left( GW ( \mathbf q (c)) \right) \times \cdots \times
c^{-k_l} L_{k_l}^{(0)} \left( GW ( \mathbf q (c)) \right) \notag \\
& = \sum_{l=1}^{k} \frac{\FM_{l+1}(\mathbf p (c))}{c^{l+1}} \sum_{k_1 + \cdots + k_l = k-l} c^{-k_1} L_{k_1}^{(0)} \left( GW ( \mathbf q (c)) \right) \times \cdots \times
c^{-k_l} L_{k_l}^{(0)} \left( GW ( \mathbf q (c)) \right). \label{Lk0GW}
\end{align}

Our aim in this section is to provide an asymptotic expansion in $c$ of the moments $c^{-k} L_{k} \left( UGW ( \mathbf p (c)) \right)$ with a precision of order $1 / {c^{1 \wedge \alpha}}$. A first step will be the asymptotic expansion of $c^{-k} L_{k}^{(0)} \left( UGW ( \mathbf p (c) )\right)$ derived later, but $1$-loops have a contribution of order $1/c$. Therefore, a recursion equation analogous to \eqref{Lk0} for $1$-loops is needed.

Denoting by $\presuper{2}{L}_{k}^{(0)} (T)$ the expectation of the number of
\emph{disjoint} and ordered pairs of $0$-loops of total length $2k$ in a random tree $T$, we have
\begin{align}
& c^{-k} L_{k}^{(1)} \left( UGW ( \mathbf p (c)) \right) \notag \\
& = \sum_{l=1}^{k-2} \frac{\FM_l(\mathbf p (c))}{c^l} \,
l \sum_{k' = 2}^{k-l} c^{-k'} L_{k'}^{(1)} \left( GW ( \mathbf q (c) ) \right)
\sum_{\substack{k_1 + \cdots + k_{l-1} \\ = k - k' - l}} \prod_{j=1}^{l-1} c^{-k_j}  
L_{k_j}^{(0)} \left( GW ( \mathbf q (c) ) \right) \notag \\
& + \sum_{l=1}^{k-1} \frac{\FM_l(\mathbf p (c))}{c^{l+1}}
\frac{l(l+1)}{2} \sum_{k' = 0}^{k-(l+1)} c^{-k'} \, \presuper{2}{L}_{k'}^{(0)}
\left( GW ( \mathbf q (c)) \right)
\sum_{\substack{k_1 + \cdots + k_{l-1} \\ = k - k' - l -1} } \prod_{j=1}^{l-1} c^{-k_j}   L_{k_j}^{(0)} \left( GW ( \mathbf q (c) ) \right).
\label{Lk1}
\end{align}
The first term corresponds to loops with an edge repeated four times in the
upper generations. As before, $l$ distinct vertices are chosen among the
children of the root (leading to the factor $\FM_l( \mathbf p (c))$). The
subtree issued from one of them contains a $1$-loop whereas the subtrees issued
from the $l-1$ other vertices contain $0$-loops. The choice of the vertex
followed by the $1$-loop induces the additional factor $l$.

\begin{figure}[t!]
\begin{center}
\includegraphics[width=\textwidth]{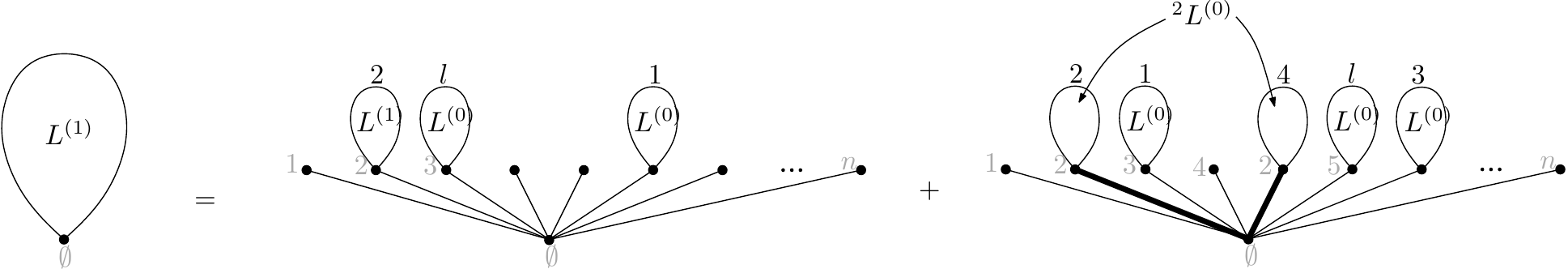}
\caption[1-loops]{\label{fig:1loops} Illustration of \eqref{Lk1} satisfied by
$L_{k}^{(1)} \left( UGW ( \mathbf p (c)) \right)$.
Numbers in gray next to vertices
count the children of the root, while numbers in black give the order of
appearance of first generation vertices in the loop. The two edges in fat are the same edge of the tree : it is the edge visited four times by the loop.
}
\end{center}
\end{figure}

The second term deals with loops where the edge repeated four times connects the
root to one of its children. Recall that a loop visits an edge connecting the
root twice before it can visit another edge connecting the root. Among the $l$
chosen edges connecting the root, $l-1$ will be repeated only twice (from the
root then towards it) and the last one will be repeated four times, giving a
total of $l+1$ visits of these $l$ edges both ways. There are now
$\binom{l+1}{2}$ choices for the ranks of the visits of the edge repeated four
times. Once these 2 ranks are fixed, and when the root has $n$ children, there
are $n (n-1) \cdots (n-l +1)$ choices for these edges giving the factor $\FM_l
(\mathbf p (c) ) l(l+1)/2$. The remaining of the loop then consists on the one
hand of $0$-loops lying in the subtrees issued from the $l-1$ first generation
vertices visited exactly twice and on the other hand of two $0$-loops lying in
the subtree issued from the first generation vertex visited four times, 
these two $0$-loops being disjoint (exception made of their starting point)
leading to the factor $\presuper{2}{L}_{k'}^{(0)} \left( GW ( \mathbf q (c)
\right)$. See Figure \ref{fig:1loops} for an illustration. We turn now to the
recursion relation satisfied by this term:
\begin{equation}
c^{-k'} \, \presuper{2}{L}_{k'}^{(0)} \left( GW ( \mathbf q (c)) \right) =
\sum_{l_1 + l_2 = 0}^{k'} \frac{\FM_{l_1 + l_2 + 1} ( \mathbf p (c) )}{c^{l_1
+l_2 + 1}}
 \sum_{\substack{k_1 + \cdots + k_{l_1 + l_2} \\ = k' -l_1 - l_2}} \prod_{j=1}^{l_1 + l_2} c^{-k_j}   L_{k_j}^{(0)} \left( GW ( \mathbf q (c))  \right).
 \label{2Lk0}
\end{equation}
In the above equation $l_1$ and $l_2$ represent the respective number of first
generation vertices visited by both loops. If the root has $n$ children, there
are $n (n-1) \cdots (n-l_1 + 1)$ possible choices for these vertices and their
order of appearance in the first loop. There are then $(n-l_1) \cdots (n-l_1 -
l_2 +1)$ choices for the vertices visited by the second loop.  This leads to the
term $\FM_{l_1 + l_2} (\mathbf q (c) ) = \FM_{l_1 + l_2 + 1} (\mathbf p (c)
)/c$. 

Finally, we need the following recursion equation for $L_{k}^{(1)} \left( GW (
\mathbf q (c) \right)$ obtained in a similar way than \eqref{Lk1}, the only
change residing in the factorial moments which are now related to $\mathbf q$
instead of $\mathbf p$ resulting in a shift from $l$ to $l+1$ in these terms:
\begin{align}
& c^{-k} L_{k}^{(1)} \left( GW ( \mathbf q (c)) \right) \notag \\
& = \sum_{l=1}^{k-2} \frac{\FM_{l+1}(\mathbf p (c))}{c^{l+1}}
l \sum_{k' = 2}^{k-l} c^{-k'} L_{k'}^{(1)} \left( GW ( \mathbf q (c) ) \right)
\sum_{\substack{k_1 + \cdots + k_{l-1} \\ = k - k' - l}} \prod_{j=1}^{l-1} c^{-k_j}   L_{k_j}^{(0)} \left( GW ( \mathbf q (c) ) \right) \notag \\
&  + \sum_{l=1}^{k-1} \frac{\FM_{l+1}(\mathbf p (c))}{c^{l+2}}
\frac{l(l+1)}{2} \sum_{k' = 0}^{k-l-1} c^{-k'} \presuper{2}{L}_{k'}^{(0)}
\left( GW ( \mathbf q (c)) \right) \sum_{\substack{k_1 + \cdots + k_{l-1}\\ = k - k' - l -1} } \prod_{j=1}^{l-1} c^{-k_j}   L_{k_j}^{(0)} \left( GW ( \mathbf q (c) ) \right).
\label{Lk1GW}
\end{align}

Recall that our aim is to compute the asymptotic expansion
\begin{equation}
c^{-k} L_k(UGW ( \mathbf p (c) ) ) =: a_k + \frac 1 {c^{1 \wedge \alpha}} b_k +o\left( \frac 1 {c^{1 \wedge \alpha}} \right).
\label{DL1}
\end{equation}
For that, we will need to compute the asymptotic expansions
\[
c^{-k} L^{(0)}_k(UGW ( \mathbf p (c) ) ) =: a^{(0)}_k + \frac 1 {c^{ \alpha}} b^{(0)}_k +o\left( \frac 1 {c^{ \alpha}} \right)
\]
and
\[
c^{-k} L^{(1)}_k(UGW ( \mathbf p (c) ) ) =: \frac 1 c b^{(1)}_k +o\left( \frac 1 c \right).
\]
Indeed, one can see that $c^{-k} L^{(1)}_k(UGW ( \mathbf p (c) ) )$ is of order $1/c$ from equation \eqref{Lk1}: the second term in the right hand side of \eqref{Lk1} is of order $1/c$ because of the factor $\FM_l (\mathbf p (c))/c^{l+1}$ and the first term is a finite sum of terms of the sequence $c^{-k'} L^{(1)}_{k'}(GW ( \mathbf q (c) ) )$. In turn, one can prove that these terms are of order $1/c$ by induction from equation \eqref{Lk1GW} due again to the presence of the term $\FM_{l+1} (\mathbf p (c))/c^{l+2}$.
The same line of reasoning allows to prove that loops with more repetitions than
$1$-loops will have a contribution of order $1/c^2$ because of a factor $\FM_l
(\mathbf p (c))/c^{l+2}$ in their recursion relation.

\bigskip
 
Identifying the main terms in \eqref{Lk0}, we get that the sequence
$\left(a_k^{(0)} \right)_{k \geq 0}$ satisfies the recursion relation of Catalan
numbers:
\[
a_k^{(0)} = \sum_{l = 1}^k \sum_{k_1 + \cdots + k_l = k-l} a_{k_1}^{(0)} \ldots a_{k_l}^{(0)} 
\]
for $k \geq 1$ and $a_0^{(0)} = 1$. Therefore, for every $k \geq 0$, one has $a_k^{(0)} = \CC(k)$.

Since equations \eqref{Lk0} and \eqref{Lk1} need equations \eqref{Lk0GW},\eqref{Lk1GW} and \eqref{2Lk0} to form a closed system of recursions, we introduce similarly the asymptotic expansions :
\begin{align*}
c^{-k} L^{(0)}_k(GW ( \mathbf q (c) ) ) & =: \CC(k) + \frac 1 {c^{\alpha}} \tilde{b}^{(0)}_k +o\left( \frac 1 {c^{\alpha}}\right); \\
c^{-k} L^{(1)}_k(GW ( \mathbf q (c) ) ) & =: \frac 1 c \tilde{b}^{(1)}_k +o\left( \frac 1 c \right); \\
c^{-k} \presuper{2}{L}^{(0)}_k(GW ( \mathbf q (c) ) ) & =: \presuper{2}{\tilde a} ^{(0)}_k + o\left( 1 \right).
\end{align*}

Now, let us compute the respective generating functions $B^{(0)}$,  $\tilde B^{(0)}$, $B^{(1)}$, $\tilde B^{(1)}$ and $\presuper{2}{\tilde A}^{(0)}$ of the numbers $b_k^{(0)}$, $\tilde b_k^{(0)}$, $b_k^{(1)}$, $\tilde b_k^{(1)}$ and $\presuper{2}{\tilde a}_k^{(0)}$. From equations \eqref{Lk0GW} and \eqref{DLcum}, we get
\begin{align*}
\tilde{b}^{(0)}_k & = \sum_{l = 1}^k f(l+1) \sum_{\substack{k_1 + \cdots + k_l \\ = k-l}} \CC(k_1) \ldots\CC(k_l) 
+ \sum_{l = 1}^k l \sum_{k' = 0}^{k-l} \tilde{b}^{(0)}_{k'} \sum_{\substack{k_1 + \cdots + k_{l-1} \\= k- k' - l}} \CC(k_1) \ldots\CC(k_{l-1}).
\end{align*}
This yields
\begin{align*}
\tilde B^{(0)}(x) & = \sum_{l \geq 1} f(l+1) x^l \left( T(x) \right)^l + \sum_{l \geq 1} l x^l \tilde B^{(0)}(x) \left( T(x) \right)^{l-1} \\
& = \frac{1}{x T(x)} \sum_{l \geq 2} f(l) \left( x T(x) \right)^l + \frac{x \tilde B^{(0)}(x)}{ \left( 1 - xT(x) \right)^2 }\\
& = \frac{F \left( xT(x) \right)}{x T(x)} + x \tilde B^{(0)}(x) \left(T(x) \right)^2
\end{align*}
where $F$ is the generating function $F(x) = \sum_{l \geq 2} f(l) x^l$. Hence
\[
\tilde B^{(0)}(x) = \frac{F \left( x T(x) \right)}{xT(x) \left( 1 - x T(x)^2\right)}.
\]
Similarly
\begin{align*}
B^{(0)}(x) & = \sum_{l \geq 1} f(l) x^l \left( T(x) \right)^l + \sum_{l \geq 1} l x^l \tilde B^{(0)}(x) \left( T(x) \right)^{l-1} \\
& = F \left( xT(x) \right)+ x \tilde B^{(0)}(x) T(x)^2
\end{align*}
leading to
\[
B^{(0)}(x) = \frac{2 F(x T(x))}{1 - x T(x)^2}.
\]

\bigskip

From equation \eqref{2Lk0}, we get
\[
\presuper{2}{\tilde a}_k^{(0)} = \sum_{l_1 + l_2 = 0}^k \sum_{k_1 + \cdots + k_{l_1 + l_2} = k-l_1 - l_2} \CC(k_1) \ldots\CC(k_{l_1 + l_2})
\]
leading to
\begin{align*}
\presuper{2}{\tilde A}^{(0)}(x) = \sum_{l_1 + l_2 \geq 0} x^{l_1 + l_2} T(x)^{l_1 + l_2} = \frac{1}{\left( 1 - x T(x) \right)^2} = T(x)^2.
\end{align*}
We now compute $\tilde B^{(1)}$ based on the following equation obtained from \eqref{Lk1GW}
\begin{align*}
\tilde b_k^{(1)} & = \sum_{l = 1}^{k-2} l \sum_{k' = 2}^{k-l} \tilde{b}^{(1)}_{k'} \sum_{\substack{k_1 + \cdots + k_{l-1} \\= k- k' - l}} \CC(k_1) \ldots\CC(k_{l-1})\\
& \qquad + \sum_{l = 1}^{k-1} \frac{l(l+1)}{2} \sum_{k' = 0}^{k-(l+1)} \presuper{2}{\tilde a}_{k'}^{(0)} \sum_{\substack{k_1 + \cdots + k_{l-1} \\ = k -k' -l - 1}} \CC(k_1) \ldots\CC(k_{l-1})
\end{align*}
leading to
\begin{align*}
\tilde B ^{(1)} (x) & = \sum_{l \geq 1} l x^l \tilde B ^{(1)} (x) T(x)^{l-1} + \sum_{l \geq 1} \frac{l (l+1)}{2} x^{l+1} \presuper{2}{\tilde A}^{(0)}(x) T(x)^{l-1} \\
& = x \tilde B ^{(1)} (x) T(x)^2 + \sum_{l \geq 1} \frac{l (l+1)}{2} \left( x T(x) \right)^{l+1}\\
& =  x \tilde B ^{(1)} (x) T(x)^2 + (x T(x))^2 \sum_{l \geq 0} \frac{l (l-1)}{2} \left( x T(x) \right)^{l-2}\\
& = x \tilde  B ^{(1)} (x) T(x)^2 + (x T(x))^2 \frac{1}{(1 - x T(x))^3} \\
& =  x \tilde B ^{(1)} (x) T(x)^2 + x^2 T(x)^5.
\end{align*}
Therefore
\[
\tilde B ^{(1)} (x) = \frac{x^2 T(x)^5}{1 - x T(x)^2}.
\]
Similarly, we compute $B^{(1)}$ from equation \eqref{Lk1}:
\begin{align*}
B ^{(1)} (x) & = \sum_{l \geq 1} l x^l \tilde B ^{(1)} (x) T(x)^{l-1} + \sum_{l \geq 1} \frac{l (l+1)}{2} x^{l+1} \presuper{2}{\tilde A}^{(0)}(x) T(x)^{l-1} \\
& =  x \tilde B ^{(1)} (x) T(x)^2 + x^2 T(x)^5 = \frac{x^2 T(x)^5}{1 - x T(x)^2}
\end{align*}
(note that $B^{(1)} = \tilde{B}^{(1)}$ !). 

This gives the theorem once we have identified $B^{(1)}$ with the moment generating function of the measure $\sigma^{\{1 \}}$.
Recall the following properties of the Hilbert transform $\HH$ of the semi-circle law:
\[
\HH(z) = \frac1z T \left( \frac{1}{z^2} \right) = \frac12 \left( z - \sqrt{z^2 -4} \right)
\]
leading to
\[
1 - \HH(z)^2 = \sqrt{z^2 -4} \times \HH(z).
\]
This gives
\begin{align*}
\HH^{(1)}(z): = \frac{1}{z} B^{(1)} \left( \frac{1}{z^2} \right) = \frac{\HH(z)^5}{1 - \HH(z)^2} = \frac{\HH(z)^4}{\sqrt{z^2 - 4}}
\end{align*}
which is the Hilbert transform of $\sigma^{\{ 1\} }$ computed in \eqref{H1}.
\end{proof}

\subsection{Applications to \ER and regular random graphs}

For \ER random graphs, $f(k) = 0$ for every $k$, therefore $F = 0$ and Theorem
\ref{UGW} directly gives Theorem \ref{ER}.

\bigskip

For a $c$-regular random graph, which converges locally towards a $c$ regular random tree, namely $UGW(\delta_c)$:
\begin{equation*}
\frac{\FM_k(\delta_c)}{c^k}  = \frac{c (c-1) \cdots (c-k +1)}{c^k} = 1 - \frac{k(k-1)}{2c} + o \left( \frac 1 c \right)
\end{equation*}
so that $f(k) = -k(k-1)/2$ for all $l \geq 0$ and
\[
F(x) = - \frac{x^2}{(1 - x)^3}.
\]
Therefore
\[
F(x T(x)) = - x^2 T(x)^5
\]
and $ \sigma_f^{\{1\}}  = - 2  \sigma^{\{1\}} $ and the perturbation of order
$1/c$ is the exact opposite as for \ER random graphs.

Note that this last density can be obtained from the Kesten Mc Kay formula
\cite{K59,mcKay81}:
\[
\mu \left( UGW(\delta_c) \right) (dx) = \frac{c}{2 \pi} \frac{\sqrt{4(c-1) - x^2 }}{c^2 - x^2} \mathbf{1}_{\{|x| < 2 \sqrt{c-1}\}}(x) \, dx.
\]

\bigskip

More generally, one can consider the family $\mathbf{p} (c)$ corresponding to
the laws of random variables of the form $c + c^{1- \frac \alpha 2} Y^{(c)}$
such that $E \left[ Y^{(c)} \right] = 0$, $E \left[( Y^{(c)} )^2 \right] \to 
\beta$ as $c \to \infty$ and all other moments of the $Y^{(c)}$'s stay bounded
with $c$. This example contains regular graphs (with $Y^{(c)} = 0)$ and \ER
random graphs (with $\alpha = \beta =1$ and $Y^{(c)}$ converging to a Gaussian
random variable as $c \to \infty$).  In this setting,
\[
\frac{\FM_k(\mathbf p (c))}{c^k}  = 1 + \beta \frac{k(k-1)}{2}\frac{1}{c^{\alpha}} -  \frac{k(k-1)}{2}\frac{1}{c} + o \left( \frac 1 {c^{1 \wedge \alpha}} \right)
\]
and the asymptotic expansion is given by
\[
m_k(\mu(\mathbf p (c)) ) = m_k (\sigma) + \frac 1 {c^{\alpha}} m_k \left( 2 \beta \sigma^{\{1\}} \right) - \frac 1 c m_k \left( \sigma^{\{1\}}  \right)  + o \left( \frac 1 {c^{1 \wedge \alpha}} \right).
\]

\section{Higher orders and edge of the spectrum}
\label{sec:order2}

\begin{proposition}
\label{proporder2}
The moments of the limiting spectral measure $\mu^c$ of the \ER random graph
have the following asymtotic expansion in $c$:
\[
m_k(\mu^c) = m_k \left(  \sigma +  \frac 1 c \sigma^{\{1\}} \right) + \frac{1}{c^2} d_k + o \left( \frac{1}{c^2} \right)
\]
where the generating series of  the numbers $d_k$ is given by
\[
D(x) =  \frac{x^3T(x)^7}{(1 - xT(x)^2)^3} \left(1 + 6 xT(x)^2 - 10 x^2 T(x)^4 + 4 x^3 T(x)^6 \right).
\]
\end{proposition}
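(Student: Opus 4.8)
The plan is to push the argument of Theorem~\ref{ER} one order further. As in that proof, the coefficient of $1/c^{2}$ in $m_{2k}(\mu^{c})$ is the number, up to relabelling of the vertices, of closed walks of length $2k$ starting and ending at the root of a rooted tree with $k-2$ edges and visiting every vertex. A closed walk on a tree crosses every edge an even number of times and, since it visits every vertex, at least twice; hence the surplus $2k-2(k-2)=4$ is distributed over the edges in exactly one of two ways: \emph{(A)} one edge is crossed six times and every other edge twice, or \emph{(B)} two distinct edges are each crossed four times and every other edge twice. Writing $D=D^{(A)}+D^{(B)}$ (the variable $x$ marking half the length of the walk, as for $S^{\{1\}}$ in Theorem~\ref{ER}), one computes the two series separately.

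For \emph{(A)} one extends verbatim the order $1/c$ computation: such a walk is encoded by a rooted tree $\tau_{1}$ with $p_{1}$ edges carrying a marked corner, at a vertex $i$, into which one inserts the block that goes $i\to j$, contours a tree $\tau_{2}$ rooted at $j$, returns to $i$, contours $\tau_{3}$ at $i$, goes $i\to j$, contours $\tau_{4}$ at $j$, returns, contours $\tau_{5}$ at $i$, goes $i\to j$, contours $\tau_{6}$ at $j$ and returns, where the edge $\{i,j\}$ is crossed six times and $p_{1}+\dots+p_{6}=k-3$. Since a rooted tree with $p$ edges has $2p+1$ corners and, using $T=1+xT^{2}$, $\sum_{p\ge 0}(2p+1)\CC(p)x^{p}=2xT'(x)+T(x)=T(x)^{2}/(1-xT(x)^{2})$, this gives $D^{(A)}(x)=(2xT'+T)\,x^{3}T^{5}=x^{3}T(x)^{7}/(1-xT(x)^{2})$.

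For \emph{(B)} one first splits according to whether the two quadruple-crossed edges $\varepsilon_{1},\varepsilon_{2}$ are comparable (one a descendant edge of the other) or incomparable. In the incomparable case the walk is described by a skeleton tree with two marked corners, into each of which an order $1/c$-type block is inserted (with additional care for the case where the two blocks sit at a common corner and for the order in which the two detours are then performed). In the comparable case the two detours are nested: the subtree below $\varepsilon_{1}$ is visited in two passages, and the four crossings of $\varepsilon_{2}$ split between these passages as $4+0$, $0+4$ or $2+2$, giving three sub-configurations. Each of these configurations unfolds into a product of the Catalan series $T$, of the marked-corner series $2xT'+T$, and of a doubly-marked-corner series of the shape $\sum_{p}\binom{2p+1}{2}\CC(p)x^{p}$; summing them yields $D^{(B)}$ as a rational fraction in $T$, $T'$ and $T''$.

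The main obstacle is exactly this enumeration in case \emph{(B)}: one must verify that the decomposition into a skeleton tree plus inserted detour blocks is a genuine bijection, i.e.\ that nothing is over-counted (the two quadruple edges play symmetric roles, as do several subtrees hanging from the same vertex, and a detour performed at a corner can sometimes be interleaved with a neighbouring passage) and nothing is omitted (every nesting of the two detours, and every way they interleave around a common vertex, must be accounted for). Once all the pieces are collected, it only remains to check, using $xT^{2}=T-1$, $T'=T^{3}/(1-xT^{2})$ and $T''=2T^{5}(3-T)/(1-xT^{2})^{3}$, the routine algebraic identity that $D^{(A)}+D^{(B)}$ is the stated closed form. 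One may also, since $\FM_{k}(\mathbf{p}(c))/c^{k}=1$ identically for the \ER graph, avoid the direct enumeration by extending the recursive scheme of the proof of Theorem~\ref{UGW}: introduce the generating series of the expected number of $2$-loops together with those of the auxiliary objects --- a disjoint pair consisting of a $0$-loop and a $1$-loop, a disjoint triple of $0$-loops, and so on --- needed to close the system of recursions, and solve it; this replaces the combinatorics of case \emph{(B)} by the manipulation of a larger linear system.
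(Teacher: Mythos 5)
Your setup is sound and your case \emph{(A)} is complete and correct: the $1/c^2$ coefficient does come from closed walks of length $2k$ covering a tree with $k-2$ edges, the excess $4$ splits into exactly your two configurations, and your count $(2xT'(x)+T(x))\,x^3T(x)^5=x^3T(x)^7/(1-xT(x)^2)$ agrees with the series $D^{(2)}$ obtained in the paper for what it calls $2$-loops. The problem is case \emph{(B)}. There you only describe a programme, and you yourself flag ``the main obstacle'' --- verifying that the skeleton-plus-detours decomposition is a bijection and collecting the pieces --- without resolving it. But that is precisely where the content of the proposition lies: the stated polynomial $1+6xT^2-10x^2T^4+4x^3T^6$ is equivalent to the claim that
\[
D^{(B)}(x)=\frac{x^4T(x)^9}{(1-xT(x)^2)^3}\left(8-11\,xT(x)^2+4\,x^2T(x)^4\right),
\]
and nothing in your argument produces or checks this. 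The case analysis is genuinely delicate: in the paper's version, the recursion for $(1,1)$-loops has five distinct terms (both marked edges inside one child's subtree; inside two different children's subtrees; one marked edge incident to the root with the second nested below it, in two inequivalent ways; both marked edges incident to the root), requiring the auxiliary series $\presuper{2}{L}^{(1)}$ and $\presuper{2}{L}^{(0)}$ with combinatorial weights $\binom{l+1}{2}$, $\binom{l+2}{4}\binom{4}{2}$, $l-1$, etc. --- exactly the interleaving and nesting counts you identify as uncertain. Your $4+0$, $0+4$, $2+2$ trichotomy for nested detours and your ``doubly-marked-corner'' series are plausible starting points, but until they are enumerated and summed, the proof is incomplete at its decisive step; an error in any one sub-case would change the final polynomial.

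On method: you take the direct route of Theorem \ref{ER} (enumerating labelled trees carrying a covering closed walk on the finite graph), whereas the paper proves the proposition by extending the loop-decomposition recursions of Theorem \ref{UGW} on the Galton--Watson limit, i.e.\ the linear system of generating series for $2$-loops, $(1,1)$-loops, disjoint pairs and triples of loops --- which is exactly the fallback you mention in your last sentence. Either route can work; whichever you choose, the case-\emph{(B)} bookkeeping must actually be carried out and shown to yield the displayed series.
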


\begin{proof}
Les us denote by $\mathcal P (c)$ the Poisson law with parameter $c$. We can write
\begin{equation}
m_k(\mu^c) = c^{-k} L_k(UGW ( \mathcal P (c) ) ) = c^{-k} L_k(GW ( \mathcal P (c) ) ) = a_k + \frac 1 c b_k + \frac{1}{c^2} d_k + o \left( \frac{1}{c^2} \right).
\label{DL2}
\end{equation}
The numbers $a_k$ and $b_k$ were computed in Section \ref{sec:ER}. The loops contributing to $d_k$ are the loops with two repetitions defined below.

\begin{definition}
Given a rooted tree,  we denote
\begin{itemize}
\item  by $2$-loops the loops started at the root and visiting each edge of the
tree either twice (a first time from the root and a second time towards the
root) or not at all with the exception of one edge visited six times
\item  by $(1,1)$-loops the loops started at the root and visiting each edge of
the tree either twice (a first time from the root and a second time towards the
root) or not at all with the exception of two distinct edges visited four times
each.
\end{itemize}
Furthermore, if $T$ is a random tree, we denote by $L_{k}^{(2)} (T)$ the expected number of $2$-loops in $T$ of length $2k$ and $L_{k}^{(1,1)} (T)$ the expected number of $(1,1)$-loops in $T$ of length $2k$.
\end{definition}

Let us first focus on $2$-loops. They satisfy the following recursion relation:
\begin{align}
& c^{-k} L_{k}^{(2)} \left( GW ( \mathcal P (c)) \right) \notag \\
& = \sum_{l=1}^{k-3} \frac{\FM_l(\mathcal P  (c))}{c^l} \,
l \sum_{k' = 3}^{k-l} c^{-k'} L_{k'}^{(2)} \left( GW ( \mathcal P (c) ) \right)
\sum_{\substack{k_1 + \cdots + k_{l-1}\\ = k - k' - l}} \prod_{j=1}^{l-1} c^{-k_j}  
L_{k_j}^{(0)} \left( GW ( \mathcal P (c) ) \right) \notag \\
& + \sum_{l=1}^{k-1} \frac{\FM_l(\mathcal P (c))}{c^{l+1}}
\binom{l+2}{3} \sum_{k' = 0}^{k-(l+2)} c^{-k'} \, \presuper{3}{L}_{k'}^{(0)}
\left( GW ( \mathcal P (c)) \right)  \sum_{\substack{k_1 + \cdots + k_{l-1}\\ = k - k' - l - 2} } \prod_{j=1}^{l-1} c^{-k_j}   L_{k_j}^{(0)} \left( GW ( \mathcal P  (c) ) \right)
\label{Lk2}
\end{align}
where $\presuper{3}{L}_{k'}^{(0)}(T)$ denotes the expectation of the number of
\emph{disjoint} and ordered triplets of $0$-loops started of total length $2k$ in a random tree $T$.
The justification is pretty similar as for relation \eqref{Lk1}.

We introduce the following notations for the asymptotic expansions :
\begin{align*}
c^{-k}  L_{k}^{(2)} \left( GW ( \mathcal P (c)) \right)
& = \frac{1}{c^2} d_k^{(2)} + o \left( \frac{1}{c^2} \right),\\
 c^{-k} \, \presuper{3}{L}_{k}^{(0)} \left( GW ( \mathcal P (c)) \right) & = \presuper{3}{ a}_{k}^{(0)} + o(1),
\end{align*}
and denote by $D^{(2)}$ and $ \presuper{3}{ A}^{(0)}$ the respective generating series of the numbers $d_k^{(2)}$ and $\presuper{3}{ a}_{k}^{(0)}$.

The expression of  $c^{-k} \, \presuper{3}{L}_{k}^{(0)} \left( GW ( \mathcal P (c)) \right)$ in terms of numbers  $c^{-k}{L}_{k}^{(0)} \left( GW ( \mathcal P (c)) \right)$ is similar to equation \eqref{2Lk0} and induces $ \presuper{3}{A}^{(0)}(x) = T(x)^3$. Equation \eqref{Lk2} gives
\begin{align*}
D^{(2)} (x) & = D^{(2)} (x) x T(x)^2 + \sum_{l \geq 1} \binom{l+2}{3} x^{l+2}
T(x)^3 T(x)^{l-1} \\
& = D^{(2)}(x) x T(x)^2+ (xT(x))^3 T(x)^4.
\end{align*}
Therefore
\[
D^{(2)} (x)  = \frac{x^3T(x)^7}{1 - x T(x)^2}.
\]

\bigskip

We now have to deal with $(1,1)$-loops. They satisfy the following recursion relation
\begin{align}
& c^{-k}  L_{k}^{(1,1)} \left( GW ( \mathcal P (c)) \right) \notag \\
& = \sum_{l=1}^{k-4} \frac{\FM_l(\mathcal P (c))}{c^l} \,
l \sum_{k' = 4}^{k-l} c^{-k'} L_{k'}^{(1,1)} \left( GW ( \mathcal P (c) ) \right)
\sum_{\substack{k_1 + \cdots + k_{l-1}\\= k - k' - l}} \prod_{j=1}^{l-1} c^{-k_j}  
L_{k_j}^{(0)} \left( GW ( \mathcal P (c) ) \right) \notag \displaybreak[0] \\
& + \sum_{l=1}^{k-4} \frac{\FM_l(\mathcal P (c))}{c^l} \,
\binom l 2 \sum_{k' = 4}^{k-l} c^{-k'} L_{k'}^{(1)} \left( GW ( \mathcal P (c) ) \right)
\sum_{k'' = 4}^{k-k'-l} c^{-k''} L_{k'}^{(1)} \left( GW ( \mathcal P (c) ) \right) \notag \\
& \qquad \qquad \qquad  \times 
\sum_{\substack{k_1 + \cdots + k_{l-2}\\= k - k' - k'' - l}} \prod_{j=1}^{l-2} c^{-k_j}  
L_{k_j}^{(0)} \left( GW ( \mathcal P (c) ) \right) \notag \displaybreak[0] \\
&  + \sum_{l=1}^{k-3} \frac{\FM_l(\mathcal P (c))}{c^{l+1}}
\binom{l+1}{2} \sum_{k' = 2}^{k-(l+1)} c^{-k'} \, \presuper{2}{L}_{k'}^{(1)}
\left( GW ( \mathcal P (c)) \right) \notag \\
& \qquad \qquad \qquad \times  \sum_{\substack{k_1 + \cdots + k_{l-1} \\ = k - k' - l - 1}} \prod_{j=1}^{l-1} c^{-k_j}   L_{k_j}^{(0)} \left( GW ( \mathcal P (c) ) \right) \notag \displaybreak[0] \\
& + \sum_{l = 1}^{k-2} \frac{\FM_l(\mathcal P (c))}{c^{l+1}}
\binom{l+1}{2} \sum_{k' = 0}^{k - l - 1} c^{-k'} \, \presuper{2}{L}_{k'}^{(0)}
\left( GW ( \mathcal P (c)) \right) \notag \\
& \qquad \qquad \qquad 
\times (l - 1 ) \sum_{k'' = 2}^{k - l - 1 - k '} c^{-k''} \, {L}_{k''}^{(1)} \left( GW ( \mathcal P (c)) \right)
\sum_{\substack{k_1 + \cdots + k_{l-2} \\ = k - k' - k'' - l - 1}} \prod_{j=1}^{l-2} c^{-k_j}   L_{k_j}^{(0)} \left( GW ( \mathcal P (c) ) \right) \notag\\
& + \sum_{l=2}^{k-4} \frac{\FM_l(\mathcal P (c))}{c^{l+2}}
\binom{l+2}{4} \binom 4 2 \sum_{k' = 0}^{k-(l+2)} c^{-k'} \, \presuper{2}{L}_{k'}^{(0)}
\left( GW ( \mathcal P (c)) \right) \notag \\
& \qquad \qquad \qquad 
\times \sum_{k'' = 0}^{k- k'-(l+2)} c^{-k''} \, \presuper{2}{L}_{k''}^{(0)} \left( GW ( \mathcal P (c)) \right)
\sum_{\substack{k_1 + \cdots + k_{l-2} \\ = k - k' - k'' -l - 2}} \prod_{j=1}^{l-2} c^{-k_j}   L_{k_j}^{(0)} \left( GW ( \mathcal P (c) ) \right) \displaybreak[0].
\label{Lk11}
\end{align}
where $\presuper{2}{L}_{k'}^{(1)}(T)$ denotes the expectation of the number of
\emph{disjoint} and ordered pairs loops of total length $2k$ in a random tree $T$, with one of the loops being a $1$-loop and the other being a $0$-loop.

The two first terms correspond to loops with repeated egdes only in the upper generations. Such loops visit $l$ distinct vertices among the children of the root (leading to the factor $\FM_l( \mathcal P (c))$). In the first term, the subtree issued from one of them contains a $(1,1)$-loop whereas the subtrees issued from the $l-1$ other vertices contain $0$-loops. In the second term, two of the subtrees issued from the $l$ vertices contain a $1$-loop whereas the subtrees issued from the $l-2$ other vertices contain $0$-loops.

The third and fourth terms deal with loops where one edge repeated four times connects the root to one of its children. Recall that a loop visits an edge connecting the root twice before it can visit another edge connecting the root. Among the $l$ chosen edges connecting the root, $l-1$ will be repeated only twice (from the root then towards it) and the last one will be repeated four times, giving a total of $l+1$ visits of these $l$ edges both ways. There are now $\binom{l+1}{2}$ choices for the ranks of the visits of the edge repeated four times. Once these 2 ranks are fixed, and when the root has $n$ children, there are $n (n-1) \cdots (n-l +1)$ choices for these edges and $l$ finally giving the factor $\FM_l (\mathcal P (c) ) l(l+1)/2$.
In the third term, the remaining of the loop consists on the one hand of $0$-loops lying in the subtrees issued from the $l-1$ first generation vertices visited exactly twice and on the other hand of one $1$-loop together with a disjoint $0$-loop lying in the subtree issued from the first generation vertex visited four times, 
leading to the factor $\presuper{2}{L}_{k'}^{(1)} \left( GW ( \mathcal P (c)
\right)$.  In the fourth term, the remaining of the loop consists on the one hand of a $1$-loop lying in one of the subtrees issued from the $l-1$ first generation vertices visited exactly twice, the rest of these subtrees being visited by $0$-loops, and on the other hand of a pair of disjoint $0$-loops lying in the subtree issued from the first generation vertex visited four times.

The last term deals with loops where the two edges repeated four times connect the root to one of its children.

We turn now to the recursion relation satisfied by $c^{-k} \, \presuper{2}{L}_{k}^{(1)} \left( GW ( \mathcal P (c)) \right)$:
\begin{align}
& c^{-k} \, \presuper{2}{L}_{k}^{(1)} \left( GW ( \mathcal P (c)) \right) \notag \\
& = 2 \sum_{l_1 = 1}^{k} \sum_{ l_2 = 0}^{k - l_1} \frac{\FM_{l_1 + l_2} ( \mathcal P (c) )}{c^{l_1
+l_2}} l_1 \sum_{k' = 1}^{k - l_1 - l_2}  c^{-k'} \, {L}_{k'}^{(1)} \left( GW ( \mathcal P (c)) \right) \notag \\
& \qquad \qquad \qquad 
\times
 \sum_{\substack{k_1 + \cdots + k_{l_1 + l_2 - 1} \\ = k -l_1 - l_2 - k'}}
\prod_{j=1}^{l_1 + l_2 - 1} c^{-k_j}   L_{k_j}^{(0)} \left( GW ( \mathcal P (c))
 \right) \notag \\
& + 
2 \sum_{l_1 = 1}^{k} \sum_{ l_2 = 0}^{k - l_1} \frac{\FM_{l_1 + l_2 } ( \mathcal P (c) )}{c^{l_1
+l_2 }}\binom{ l_1+1}{2} 
\sum_{k' = 0}^{k - l_1 - l_2 -1 }  c^{-k'} \, \presuper{2}{L}_{k'}^{(0)} \left( GW ( \mathcal P (c)) \right) \notag \\
& \qquad \qquad \qquad 
\times
 \sum_{\substack{k_1 + \cdots + k_{l_1 + l_2 - 1} \\ = k -l_1 - l_2 - k' - 1}}
\prod_{j=1}^{l_1 + l_2 - 1} c^{-k_j}   L_{k_j}^{(0)} \left( GW ( \mathcal P (c))
 \right). \label{2Lk1}
\end{align}
As in equation \eqref{2Lk0}, the parameters $l_1$ and $l_2$ represent the respective number of first generation vertices visited by both loops. If the root has $n$ children, there are $n (n-1) \cdots (n-l_1 + 1)$ possible choices for these vertices and their order of appearance in the first loop. There are then $(n-l_1) \cdots (n-l_1 - l_2 +1)$ choices for the vertices visited by the second loop.  This leads to the term $\FM_{l_1 + l_2} (\mathcal P (c) )$.

The first term on the right hand side of the equation verified by
$\presuper{2}{L}_{k}^{(1)}$ deals with pairs of loops where the repeated edge
lies in the upper generations of the tree. In this case, if the $1$-loop visits
$l_1$ first generation vertices, exactly one of the subtrees issued from these
vertices is visited by a $1$-loop (hence the multiplicative factor $l_1$), and
the $l_1 + l_2 - 1$ remaining subtrees are visited by $0$-loops. The factor $2$
comes from the fact that the pair of loops ($1$-loop and $0$-loop) is ordered.

Finally, the second term on the right hand side of the equation verified by
$\presuper{2}{L}_{k}^{(1)}$ deals with pairs of loops where the repeated edge
connects a first generation vertex to the root. Consider such a pair of loops.
As in equation \eqref{2Lk0},  if the $1$-loop visits $l_1$ vertices in the first
generation of a tree, there are $\binom{l+1}{2}$ choices for the ranks of the
visits of the edge repeated four times. The remaining of the loops then consists
on the one hand of $0$-loops lying in the subtrees issued from the $l_1 + l_2
-1$ first generation vertices visited exactly twice and on the other hand of two
$0$-loops lying in the subtree issued from the first generation vertex visited
four times, these two $0$-loops being disjoint (exception made of their starting
point). Here again, the factor $2$ comes from the fact that the pair of loops
($1$-loop and $0$-loop) is ordered.

\bigskip

Let us introduce the following asymptotic expansions:
\begin{align*}
c^{-k}  L_{k}^{(1,1)} \left( GW ( \mathcal P (c)) \right)
& = \frac{1}{c^2} d_k^{(1,1)} + o \left( \frac{1}{c^2} \right);\\
 c^{-k} \, \presuper{2}{L}_{k}^{(1)} \left( GW ( \mathcal P (c)) \right) & =
\presuper{2}{ b}_{k}^{(1)} + o(1)
\end{align*}
and denote by $\presuper{2}{ B}^{(1)}(x) $ the generating series of the numbers
$ \presuper{2}{ b}_{k}^{(1)}$. Using the fact that $\FM_{k} ( \mathcal P
(c) ) = c^k$ for every $k \geqslant 0$, the equation \eqref{2Lk1} yields
\begin{align*}
\presuper{2}{ B}^{(1)}(x) & = 2 B^{(1)} (x) \left( \sum_{l_1\geq 1} l_1 x^{l_1} T(x)^{l_1 -1} \right)\left( \sum_{ l_2\geq 0} (xT(x))^{l_2} \right)\\
& \qquad \qquad \qquad + 2 \presuper{2}{ \tilde A}^{(0)}(x)  \left( \sum_{l_1\geq 1} \frac{l_1 (l_1 +1)}{2} x^{l_1+1} T(x)^{l_1 -1} \right)\left( \sum_{l_2 \geq 0} (xT(x))^{l_2} \right)\\
& = \frac{2 x^3 T(x)^8}{1 - x T(x)^2} + 2 x^2 T(x)^6 =  \frac{2 x^2 T(x)^6}{1 - x T(x)^2}.
\end{align*}
In addition, If we denote by $D^{(1,1)} $ the generating series of the numbers $ d_k^{(1,1)}$, the equation \eqref{Lk11} yields
\begin{align*}
& D^{(1,1)}(x)\\
 &  = D^{(1,1)}(x) xT(x)^2 + \left( B^{(1)}(x) \right)^2 \sum_{l \geq 0} \frac{l (l-1)}{2} x^l T(x)^{l-2} \\
& \qquad  \qquad \qquad + \presuper{2}{B}^{(1)}(x) \sum_{l \geq 0} \frac{l (l+1)}{2} x^{l+1} T(x)^{l-1}
+ \left( \presuper{2}{\tilde A}^{(0)}(x) \right)^2 \sum_{l \geq 2} \binom{l+2}{4} \binom  4 2 x^{l+2} T(x)^{l-2}\\
& \qquad \qquad \qquad +  \presuper{2}{\tilde A}^{(0)}(x) {\tilde B}^{(1)}(x) \sum_{l \geq 1} \frac{(l+1)l(l-1)}{2} x^{l+1} T(x)^{l-1} \\
&  = D^{(1,1)}(x) xT(x)^2 + \left( \frac{x^2T(x)^5}{1 - x T(x)^2} \right)^2 x^2 T(x)^3 +  \frac{2 x^2 T(x)^6}{1 - x T(x)^2} x^2 T(x)^3 \\
& \qquad \qquad \qquad \qquad \qquad  + 6x^4 T(x)^9 + 3  \frac{ x^5 T(x)^{11}}{1 - x T(x)^2}\\
& = \frac{x^4T(x)^9}{(1 - xT(x)^2)^3} \left(8 - 11 xT(x)^2 +4 x^2 T(x)^4 \right).
\end{align*}

Finally, the generating series of the term of the second order of the moments of
\ER spectral measure is given by :
\begin{align*}
D(x) = D^{(2)}(x) + D^{(1,1)}(x) =  \frac{x^3T(x)^7}{(1 - xT(x)^2)^3} \left(1 + 6 xT(x)^2 - 10 x^2 T(x)^4 + 4 x^3 T(x)^6 \right).
\end{align*}
\end{proof}

In the spirit of Theorem \ref{ER}, we would like to interpret the numbers $d_k$
as the moments of a measure with null mass. To that aim, let us compute the
Stieljes transform of their generating series $D$:
\begin{align*}
\HH^{(2)} (z) & = \frac{1}{z} D \left( \frac{1}{z^2} \right) = \frac{\HH(z)^7}{1 - \HH(z)^2} \left(1+ 6 \HH(z)^2 -10 \HH(z)^4 +4 \HH(z)^6 \right) \\
&= \frac{\HH(z)^4 + 6 \HH(z)^6 - 10 \HH(z)^8 + 4 \HH(z)^{10}}{(z^2 -4)^{3/2}}.
\end{align*}
It is then easy to obtain
\begin{align*}
\lim_{\varepsilon \to 0} - \frac{1}{\pi} \HH^{(2)} (x + i \varepsilon) & = \frac{- 2 x^{10} + 25 x^{8} -113 x^6 + \frac{435}{2} x^4 - 155 x^2 + 19}{\pi (4- x^2)^{3/2}} \mathbf{1}_{|x|<2}
\end{align*}
which is not the density of a measure (this function has a non integrable singularity at $2$ and $-2$)!
This is due to the fact that the support of $\mu^c$ is not $[-2,2]$ but unbounded. 

It is possible to overcome this problem by trying to approximate the moments of $\mu^c$ by the moments of measures supported on intervals larger than $[-2, 2]$. Before stating our result, we introduce for $\alpha > 0$ the \emph{dilation operator} $\Lambda_{\alpha}$ that transforms a measure $\mu$ into the measure $\Lambda_{\alpha}(\mu)$ satisfying for every Borel set $A$, $\Lambda_{\alpha}(\mu) (A) = \mu (A/ \alpha)$.

\begin{theorem}
\label{edge}
The moments of $\mu^c$ satisfy the following asymptotic expansion:
\begin{align*}
m_k \left( \mu^c \right) & = m_k \left( \Lambda_{1 + \frac{1}{2c}} \left( \sigma + \frac 1 c \hat{\sigma}^{\{1\}} + \frac 1 {c^2} \hat{\sigma}^{\{2\}} \right) \right)  + o \left( \frac{1}{c^2} \right)
\end{align*}
where $\sigma$ is the semi-circle law, $\hat \sigma^{\{1\}}$ is the measure with null mass and density given by
\[
\hat f^{(1)} (x) = - \frac{x^4 - 5 x^2 + 4}{2 \pi \sqrt{4 - x^2}} \mathbf{1}_{|x|<2}
\]
and $\hat \sigma^{\{2\}}$ is the measure with null mass and density given by
\[
\hat f ^{(2)} (x) = - \frac{2x^8 -17 x^6 + 46 x^4 - \frac{325}{8} x^2 + \frac{21}{4}}{\pi \sqrt{4-x^2}}  \mathbf{1}_{|x|<2}.
\]
\end{theorem}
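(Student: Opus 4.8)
The plan is to reduce Theorem \ref{edge} to an identity between moment generating series and then invert it via the Stieltjes transform, exactly as in the proofs of Theorems \ref{ER} and \ref{UGW}. The dilation operator scales moments, $m_k(\Lambda_\alpha\mu) = \alpha^k m_k(\mu)$, so with $\alpha = 1 + \frac1{2c}$ one has $\bigl(1+\frac1{2c}\bigr)^{2k} = 1 + \frac kc + \frac{k(2k-1)}{4c^2} + o(c^{-2})$. Adopting the convention that $d_k$ is the coefficient of $c^{-2}$ in $m_{2k}(\mu^c)$, Proposition \ref{proporder2} reads $m_{2k}(\mu^c) = m_{2k}(\sigma) + \frac1c m_{2k}(\sigma^{\{1\}}) + \frac1{c^2}d_k + o(c^{-2})$, while the odd moments are $o(c^{-2})$ (both sides involving symmetric, hence even, measures). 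Writing $\nu_c = \sigma + \frac1c\hat\sigma^{\{1\}} + \frac1{c^2}\hat\sigma^{\{2\}}$, expanding $m_{2k}\bigl(\Lambda_{1+\frac1{2c}}\nu_c\bigr) = \bigl(1+\frac1{2c}\bigr)^{2k}\bigl(m_{2k}(\sigma) + \frac1c m_{2k}(\hat\sigma^{\{1\}}) + \frac1{c^2}m_{2k}(\hat\sigma^{\{2\}})\bigr)$ and matching the coefficients of $c^{-1}$ and $c^{-2}$, the theorem becomes equivalent to
\[
m_{2k}(\hat\sigma^{\{1\}}) = m_{2k}(\sigma^{\{1\}}) - k\,m_{2k}(\sigma), \qquad m_{2k}(\hat\sigma^{\{2\}}) = d_k - k\,m_{2k}(\sigma^{\{1\}}) + \frac{k(2k+1)}{4}\,m_{2k}(\sigma),
\]
the second identity being obtained after using the first to remove $m_{2k}(\hat\sigma^{\{1\}})$.

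It remains to exhibit the measures carrying these moments. On the even-moment generating series $M_\nu(x) = \sum_{k\ge0}m_{2k}(\nu)x^k$ the maps $m_{2k}\mapsto k\,m_{2k}$ and $m_{2k}\mapsto k^2 m_{2k}$ become $M\mapsto xM'$ and $M\mapsto x(xM')'$; since $M_\sigma = T$, $M_{\sigma^{\{1\}}} = S^{\{1\}}$ and $\sum_k d_k x^k = D$, the two identities above become $M_{\hat\sigma^{\{1\}}} = S^{\{1\}} - xT'$ and $M_{\hat\sigma^{\{2\}}} = D - x(S^{\{1\}})' + \frac14\bigl(3xT' + 2x^2 T''\bigr)$. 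Feeding in the functional equation $T = 1 + xT^2$ and its consequences ($1-xT^2 = 2-T$, $T' = T^3/(2-T)$, $T'' = 2T^5(3-T)/(2-T)^3$, $S^{\{1\}} = x^2 T^5/(2-T)$), reducing to a common denominator and cancelling the factors of $2-T$ that the functional equation forces, gives the closed forms
\[
M_{\hat\sigma^{\{1\}}}(x) = -xT(x)^3, \qquad M_{\hat\sigma^{\{2\}}}(x) = \frac{T(x)\bigl(T(x)-1\bigr)\bigl(16T(x)^3 - 56T(x)^2 + 64T(x) - 21\bigr)}{4\bigl(2-T(x)\bigr)},
\]
both vanishing at $x=0$, so that $\hat\sigma^{\{1\}}$ and $\hat\sigma^{\{2\}}$ have total mass $0$.

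Finally one passes to Stieltjes transforms. Using $\HH(z) = \frac1z T(1/z^2)$ together with $z\HH(z) = \HH(z)^2 + 1$ and $1 - \HH(z)^2 = \sqrt{z^2-4}\,\HH(z)$ (so that $2 - T(1/z^2) = \sqrt{z^2-4}\,\HH(z)$ and $T(1/z^2) = \HH(z)^2 + 1$), the substitution $x = 1/z^2$ yields $\hat\HH^{(1)}(z) := \frac1z M_{\hat\sigma^{\{1\}}}(1/z^2) = -\HH(z)^3$ and, after rewriting the cubic $16T^3 - 56T^2 + 64T - 21$ as $16\HH^6 - 8\HH^4 + 3$, $\hat\HH^{(2)}(z) := \frac1z M_{\hat\sigma^{\{2\}}}(1/z^2) = \frac{\HH(z)^2\bigl(16\HH(z)^6 - 8\HH(z)^4 + 3\bigr)}{4\sqrt{z^2-4}}$. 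The decisive feature is that these transforms carry only $\sqrt{z^2-4}$ in the denominator, in contrast with the raw $\HH^{(2)}$ of Section \ref{sec:order2}, which carries $(z^2-4)^{3/2}$ and hence a non-integrable edge density; the dilation by $1 + \frac1{2c}$ is exactly what removes that non-integrable contribution, and $\frac1{2c}$ is the unique multiplier doing so at this order. Inverting $-\frac1\pi\lim_{\varepsilon\to0}\mathrm{Im}\,\hat\HH^{(j)}(x+i\varepsilon)$ as in the computation following \eqref{H1}, with $\sqrt{(x+i0)^2-4} = i\sqrt{4-x^2}$ and $\HH(x+i0) = \frac12(x-i\sqrt{4-x^2})$, then produces densities of the form $\mathrm{polynomial}/\sqrt{4-x^2}$ supported on $(-2,2)$, namely $\hat f^{(1)}$ and $\hat f^{(2)}$.

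The only delicate part is the middle paragraph: carrying the combination $D - x(S^{\{1\}})' + \frac14(3xT' + 2x^2T'')$ over the denominator $(2-T)^3$, checking that the numerator is divisible by $(2-T)^2$, and re-expanding $16T^3 - 56T^2 + 64T - 21$ as a polynomial in $\HH^2$ through $T = \HH^2 + 1$. Everything else is dictated by the scaling rule $m_k(\Lambda_\alpha\mu) = \alpha^k m_k(\mu)$ and the identities for $\HH$ already established in Section \ref{sec:ER} and in the proof of Theorem \ref{UGW}.
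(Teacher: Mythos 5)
Your proof follows essentially the same route as the paper's: expand the dilation factor $\left(1+\tfrac1{2c}\right)^{2k}$, translate the corrected moments $b_k-ka_k$ and $d_k-kb_k+\tfrac{k(2k+1)}{4}a_k$ into generating series, pass to Stieltjes transforms via $x=1/z^2$, and note that $\alpha=\tfrac12$ is exactly the dilation cancelling the $(z^2-4)^{-3/2}$ edge singularity; your closed forms $-xT^3$ and $\frac{T(T-1)(16T^3-56T^2+64T-21)}{4(2-T)}$ agree with the paper's $-\HH^3$ and $\frac{\HH^2(16\HH^6-8\HH^4+3)}{4\sqrt{z^2-4}}$, so the argument is correct and not a genuinely different one. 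The only caveat — inherited from the statement rather than introduced by you — is the final sign: performing the inversion $-\tfrac1\pi\lim_{\varepsilon\to0}\mathrm{Im}$ with $\HH(x+i0)=\tfrac12\left(x-i\sqrt{4-x^2}\right)$ actually yields $+\frac{x^4-5x^2+4}{2\pi\sqrt{4-x^2}}$ (whose second moment is $\mathrm{Cat}(1)-\mathrm{Cat}(2)=-1=b_1-a_1$, as it must be) and likewise $+\frac{2x^8-17x^6+46x^4-\frac{325}{8}x^2+\frac{21}{4}}{\pi\sqrt{4-x^2}}$, i.e.\ the negatives of the displayed $\hat f^{(1)}$ and $\hat f^{(2)}$, so the overall minus signs in the theorem (and in the paper's own proof) look like typos that your computation, if carried to the end, would not reproduce.
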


Before proving this theorem, let us make a brief comment. The measures appearing in the theorem are all supported on $]-2 - \frac{1}{c} ; 2 + \frac{1}{c} [$. This suggests that, in some sense, the right edge of the spectrum $\mu^c$ is located at $2 +\frac{1}{c}$. This can be compared with the spectrum of an infinite $d$-regular tree which is supported on $[-2\sqrt{d-1}, 2 \sqrt{d -1}]$ by the Kesten McKay formula; rescaling the spectrum by a factor $d^{-1/2}$ yields a support between $- 2 + 1/d$ and $2 - 1/d$ up to a correction of order $o(1/d)$.

\begin{proof}[Proof of Theorem \ref{edge}]
Fix $\alpha \in \RR$ and define
\begin{align*}
\hat m_{2k} & = \left(1 + \frac \alpha c \right)^{-2k} c^{-k} L_k \left( UGW \left( \mathbf{p} (c) \right) \right) \\
& = a_k + \frac 1 c \left( b_k - 2k \alpha a_k \right) + \frac{1}{c^2} \left(d_k - 2 k \alpha b_k + k (2k +1 ) \alpha^2 a_k \right) + o \left( \frac{1}{c^2} \right) \\
& =: a_k + \frac{1}{c} \hat b_k + \frac{1}{c^2} \hat d_k + o \left( \frac{1}{c^2} \right).
\end{align*}
If we can find $\alpha$ such that both $\hat b_k$ and $\hat d_k$ are the moments of two measures $\hat \mu^{(1)}$ and $\hat \mu^{(2)}$, then the following expansion holds:
\[
\int \left( \frac{x}{1 + \frac{\alpha}{c}}\right)^{2k} d \mu^c(x) = \int x^{2k} \sigma(x) dx + \frac{1}{c} \int x^{2k} \hat  \mu^{(1)}(dx) + \frac{1}{c^2} \int x^{2k } \mu^{(2)}(dx) + o \left( \frac{1}{c^2} \right)
\]
giving the asymptotic expansion announced in the theorem.

Now let us compute $\alpha$. To that aim, we need the generating series of $\hat d_k$:
\begin{align*}
\hat D(x) = D(x) -2 \alpha x B'(x) + 2 \alpha^2 x^2 T''(x) + 3 \alpha^2 x T'(x)
\end{align*}
With
\begin{align*}
T(x) &= \frac{1}{1-xT(x)}; \\
T'(x) &= \frac{T(x)^3}{1-xT(x)^2}; \\
T''(x) &= \frac{2T(x)^5(2-xT(x)^2}{\left( 1-xT(x)^2 \right)^3}; \\
B'(x) & = \frac{2 xT(x)^5 + 2x^2T(x)^7 -2 x^3 T(x)^9}{\left( 1-xT(x)^2 \right)^3}
\end{align*}
we obtain
\begin{align*}
\hat D(x)  & = \left( 1-xT(x)^2 \right)^{-3}  
\left( 4x^6T(x)^{13} -10 x^5 T(x)^{11} + \left( 6  + 4 \alpha \right) x^4 T(x)^9 \right. \\
& \qquad \qquad \qquad  \left. + \left( 1 -4 \alpha - \alpha^2 \right) x^3 T(x)^7
 + \left( -4 \alpha + 2 \alpha^2 \right) x^2 T(x)^5 + 3 \alpha^2 xT(x)^3 \right).
\end{align*}
We then have to compute the Stieljes transform of $\hat D$:
\begin{align*}
 \HH ^{\hat D} (z) & = \frac{1}{z} \hat D \left( \frac{1}{z^2} \right)   \\
& = \frac{ 4\HH^{10} -10 \HH^{8} + \left( 6  + 4 \alpha \right) \HH^6 + \left( 1 -4 \alpha - \alpha^2 \right) \HH^4
 + \left( -4 \alpha + 2 \alpha^2 \right)\HH^2 + 3 \alpha^2}{ \left( z^2 - 4  \right)^{3/2} }.
\end{align*}
The singularities of $\HH ^{\hat D}$ at $z = 2$ and $z =-2$ do not allow it to be the Stieljes transform of a measure except if the numerator is null for  $z = 2$ and $z =-2$. Since $\HH (2) = \HH (-2) = 1$, this gives the necessary condition
\[4 \alpha^2 - 4\alpha +1 = 0\]
with $\alpha = 1/2$ as the only solution. We then have, using the identity $1 -
\HH (z)^2 = \sqrt{z^2-4} \, \HH (z)$,
\[
\HH ^{\hat D} (z) = \frac{\HH^2 \left(16 \HH^6 - 8\HH^4 + 3 \right)}{4 \sqrt{z^2 - 4}}.
\]
This is the Stieljes transform of a measure with density given by
\[
\hat f ^{(2)} (x) = - \frac{2x^8 -17 x^6 + 46 x^4 - \frac{325}{8} x^2 + \frac{21}{4}}{\pi \sqrt{4-x^2}}  \mathbf{1}_{|x|<2}.
\]
In this setting, the perturbation of order $2$ is a measure with total mass $0$,
supported on $\left[-2 + \frac{1}{c} ; 2 + \frac{1}{c} \right]$ and with density
\[
\frac{1}{1 + \frac{1}{2c}} \hat f^{(2)} \left( \frac{x}{1 + \frac{1}{2c}}\right).
\]

Note that this also changes the perturbation of order $1$, indeed, recall that $\hat b_k = b_k - 2 \alpha k a_k = b_k - k a_k$. The generating series of $\hat b_k$ is
\[
\hat B (x) = B(x) - x T'(x) = \frac{x^2 T(x)^5 - x T(x)^3}{1 - xT(x)^2}.
\]
The corresponding Stieljes tranform is given by
\[
\HH ^{\hat B } (z) = \frac{\HH^4 - \HH^2}{\sqrt{z^2 - 4}}
\]
and corresponds to a measure with density
\[
\hat f^{(1)} (x) = - \frac{x^4 - 5 x^2 + 4}{2 \pi \sqrt{4 - x^2}} \mathbf{1}_{|x|<2}.
\]
Therefore, the perturbation of order $1$ is also a measure with total mass $0$,
supported on $\left[-2 + \frac{1}{c} ; 2 + \frac{1}{c} \right]$ and with density
\[
\frac{1}{1 + \frac{1}{2c}} \hat f^{(1)} \left( \frac{x}{1 + \frac{1}{2c}}\right).
\] \end{proof}

\section{Appendix: obstacles in the resolvent method}

In random matrix theory, the usual alternative to the moments method is the
so-called resolvent method. In this short section, we want to explain why this
method fails for our purpose. For the sake of simplicity, we focus on the
special case of the \ER random graph.

For a general probability measure $\mu$, the resolvent $R_{\mu}$ of $\mu$ is a function defined for every $z \in \mathbb C$ by
\[ 
R_{\mu} (z) = \int_{\RR} \frac{d\mu (x)}{x - z}. 
\]
The resolvent $Y_c$ of the spectral measure of an unimodular Galton Watson tree
with reproduction law Poisson with parameter $c$ satisfies the following
identity in law \cite{BL10}:
\[
Y_c(z) = - \frac{1}{z + \sum_{i=1}^{N(c)} Y_{c,i}(z) }
\]
where $N(c)$ is a Poisson random variable with parameter $c$ and the $Y_{c,i}$'s are iid copies of $Y_c$. The resolvent of $\mu^c$, the limiting spectral measure of the \ER random graph with parameter $c/n$ as $n \to \infty$ as defined at the begining of Section \ref{sec:ER}, is given by the expectation of $Y^{(c)} (z)  = \sqrt c Y_c (\sqrt c z)$.

In this setting, it is common to introduce
\[
f^{(c)} (u,z) = E \left[ e^{iuY^{(c)}(z)} \right]
\]
which satisfies the following functional equation (see \cite{KSV04} for \ER and \cite{BL10} Section 2. 2 for a more general case):
\begin{equation}
\label{eq:fbessel}
f^{(c)} (u,z)  = 1 - \sqrt u \displaystyle \int _0^{\infty} \displaystyle \frac{J_1 \left( 2 \sqrt{us} \right)}{\sqrt s} e^{i s z} e^{c \left( f^{(c)} (\frac s c  ,z) - 1\right)} ds
\end{equation}
where $J_1$ denotes the Bessel function of the first kind with index $1$.
We want to compute $E \left[ Y^{(c)} (z) \right] = - i \frac{\partial f^{(c)} }{\partial u} (0  ,z)$; to this aim, let us take the derivative of equation \eqref{eq:fbessel}:
\begin{align*}
\frac{\partial f^{(c)} }{\partial u} (u ,z) & = - \int_0^{\infty} \left(\frac{1}{2 \sqrt u} \frac{J_1(2 \sqrt{us})}{\sqrt s} + J'_1(2 \sqrt{us})  \right)e^{i s z} e^{c \left( f^{(c)} (\frac s c  ,z) - 1\right)} ds \\
& = - \int_0^{\infty} \left( - J_2(2 \sqrt{us}) +  2 \frac{J_1(2 \sqrt{us})}{\sqrt s}  \right)e^{i s z} e^{c \left( f^{(c)} (\frac s c  ,z) - 1\right)} ds.
\end{align*}
Taking $u \to 0$, this yields
\begin{equation}
\label{eq:EYc}
E \left[ Y^{(c)} (z) \right]  = i \int_0^{\infty} e^{i s z} e^{c \left( f^{(c)} (\frac s c  ,z) - 1\right)} ds.
\end{equation}
We want to compute an asymptotic expansion of $E \left[ Y^{(c)} (z) \right] $ as $c \to \infty$. Let us forget about the technical details and write the following \emph{formal} asymptotic expansion:
\[
f^{(c)} (u,z) = f_0 (u,z) + f_1(u,z) \frac 1 c + o\left( \frac 1 c \right).
\]
This also implies that
\[ E \left[ Y^{(c)} (z) \right]  = g_0 (z) + g_1(z) \frac 1 c + o\left( \frac
1 c \right) \]
with $g_i(z) = - i \frac{\partial f_i }{\partial u} (0  ,z) $.
Taking $c \to \infty$ in equation \eqref{eq:EYc} \emph{formally} gives
\begin{equation}
\label{g0func}
g_0(z) = i \int_0^{\infty} e^{isz} e^{is g_0(z)} ds = \frac{-1}{z + g_0(z)}.
\end{equation}
This is the functional equation satisfied by the Hilbert transform of the
semi-circle law, so we have recovered, at least at a \emph{formal} level, that
$\mu^{c} \to \sigma$ as $c \to \infty$. One can imagine that with some work,
this method can be rigorously justified.

However, if we pursue this method to compute the perturbation of order $1/c$, we will meet more serious problems. Still, we will continue the formal computations in order to try and recover the result of Theorem \ref{ER}. Indeed, equation \eqref{eq:fbessel} leads to
\begin{align}
f_0(u,z) + \frac 1 c f_1 (u,z)  & = 1 - \sqrt u \displaystyle \int _0^{\infty} \displaystyle \frac{J_1 \left( 2 \sqrt{us} \right)}{\sqrt s} e^{i s z} e^{c \left( f_0 (\frac s c  ,z) - 1\right)} e^{f_1 (\frac s c  ,z) }ds + o \left( \frac 1 c \right) \notag \\
& = 1 - \sqrt u \displaystyle \int _0^{\infty} \displaystyle \frac{J_1 \left( 2 \sqrt{us} \right)}{\sqrt s} e^{i s z} e^{is g_0(z) + \frac{s^2}{2c} \frac{\partial^2 f_0}{\partial u^2} (0,z) + \frac{s}{c} \frac{\partial f_1}{\partial u}(0,s) } ds + o \left( \frac 1 c \right). \label{formal exp}
\end{align}
This allows to compute $f_0$ :
\[
f_0(u,z) = 1 - \sqrt u \displaystyle \int _0^{\infty} \displaystyle \frac{J_1 \left( 2 \sqrt{us} \right)}{\sqrt s} e^{i s z} e^{is g_0(z)} ds = e^{-i \frac{u}{z + g_0(z)}} = e^{iu g_0(z)}.
\]
Replacing $f_0$ in \eqref{formal exp}, one gets
\begin{align*}
f_1(u,z) & =  - \sqrt u \displaystyle \int _0^{\infty} \displaystyle \frac{J_1 \left( 2 \sqrt{us} \right)}{\sqrt s}
e^{is (z + g_0(z))}
\left(s   \frac{\partial f_1}{\partial u}(0,z) + \frac{s^2}{2} \frac{\partial^2 f_0}{\partial u^2} (0,z)\right) ds \\
& =  - \sqrt u \displaystyle \int _0^{\infty} \displaystyle \frac{J_1 \left( 2 \sqrt{us} \right)}{\sqrt s}
e^{is (z + g_0(z))}
\left(s   \frac{\partial f_1}{\partial u}(0,z) - \frac{s^2}{2} \left( g_0(z) \right)^2 \right) ds 
\end{align*}
This last equation is where serious problems start: the integral is now 
divergent! Indeed, one has $\sqrt{us} . J_1(2 \sqrt{us}) \sim \sqrt{2 /
\pi} s^{1/4} u^{1/4} \cos (2\sqrt{us} - 3 \pi / 4)$ as $s \to \infty$.
Therefore, the expression of $f_1$ can only be considered at a formal level.
From there, we can compute the term of order $1/c$ of the Hilbert transform of
$\mu^{c}$:
\begin{align*}
\frac{\partial f_1}{\partial u}(0,z) & = - \int_0^{\infty} e^{is (z +
g_0(z))}\left(s   \frac{\partial f_1}{\partial u}(0,z) - \frac{s^2}{2} \left(
g_0(z) \right)^2 \right) ds\\
& = \frac{1}{i(z+g_0(z))} \int_{0}^{- i . (z + g_0(z)) \times \infty} e^{-t}
\left( \frac{it}{z + g_0(z)} \frac{\partial f_1}{\partial u}(0,z) +
\frac{t^2}{(z + g_0(z))^2}  \frac{(g_0(z))^2}{2}\right) dt
\end{align*}
where the last line is obtained with the change of variables $t = -i s (z +
g_0(z))$. Here again, taking no precautions and staying at a formal level
(writing $ \int_{0}^{- i . (z + g_0(z)) \times \infty} e^{-t} t^k dt = \Gamma (k
+1)$), one gets
\begin{align*}
\frac{\partial f_1}{\partial u}(0,z) &=  \frac{1}{(z+g_0(z))^2} \frac{\partial f_1}{\partial u}(0,z) - i \frac{(g_0(z))^2}{(z+g_0(z))^3} \\
& = (g_0(z))^2 \frac{\partial f_1}{\partial u}(0,z) + i (g_0(z))^5
\end{align*}
using \eqref{g0func} to obtain the last equality. This finally yields
\[
g_1(z) = - i \frac{\partial f_1}{\partial u}(0,z) = \frac{(g_0(z))^5}{1 - (g_0(z))^2}
\]
which is the Hilbert transform of $\sigma^{\{ 1\} }$ computed in \eqref{H1}.

\bigskip

Finally, let us mention that our best efforts to try to obtain the second order
term of Proposition \ref{proporder2} by an analogous formal computation failed.

\section{Appendix: numerical simulations}

We present here numerical simulations on 100 adjacency matrices of \ER graphs with 10000 vertices for $c=20$. Figure \ref{notrescaled} illustrates Theorem \ref{ER} and Figure \ref{rescaled} illustrates Theorem \ref{edge}. Concerning the plot of the second order perturbation in Figure \ref{rescaled}, one can notice a difference between the histogram and the density of the associated limiting measure. This difference can be explained by the fact that $n$ is not large enough even if it was large enough for the first order.

This raises the following interesting question: find a sequence of integers $n(c)$ (respectively $n_1(c)$, $n_2(c)$) depending on $c$ such that the moments of $\mu_{n(c)} ^c$ (respectively $c \left( \mu_{n_1(c)} ^c - \sigma \right)$,  
 $c^2 \left( \mu_{n_2(c)} ^c - \Lambda_{1 + \frac{1}{2c}} \left( \sigma + \frac 1 c \hat \sigma^{1} \right) \right)$) converge towards the moments of $\sigma$ (respectively $\sigma^{\{1\}}$, $\hat \sigma^{\{2\}}$).

\begin{figure}[h!]
\begin{center}
\includegraphics[width=\textwidth]{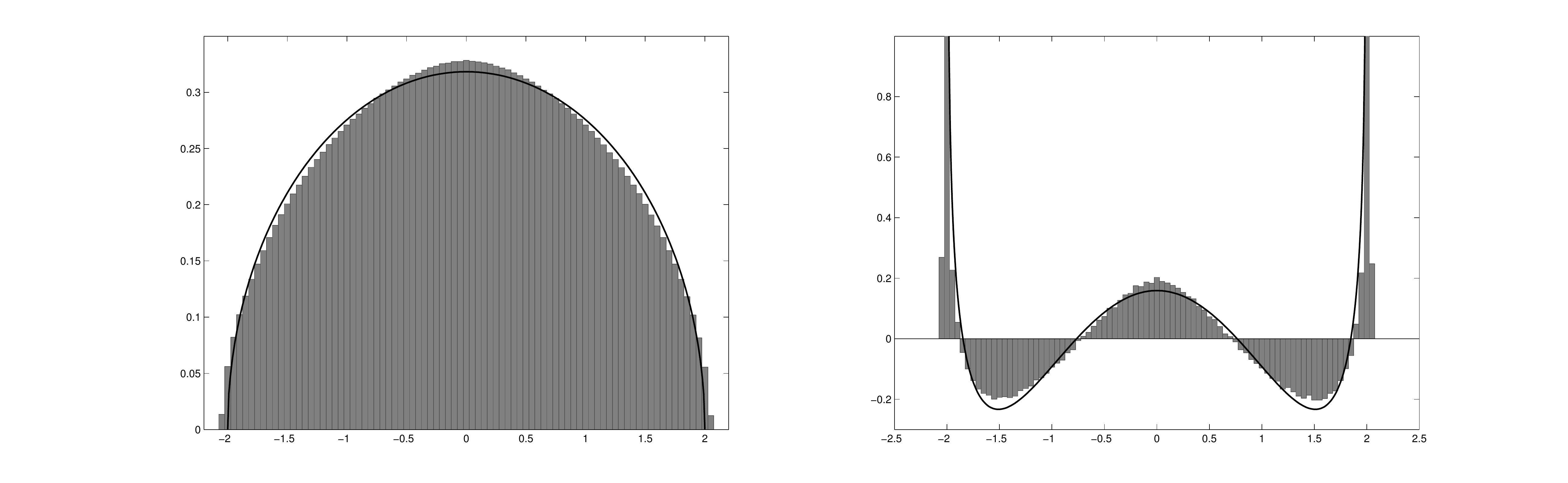}
\caption{\label{notrescaled} {\it Left:} Histrogram of $ \mu_n^{c} $ compared with the density of the semi-circle law  $\sigma$. {\it Right:} Histrogram of $c ( \mu_n^{c} - \sigma )$ compared with the density of $\sigma^{\{1\}}$.}
\end{center}
\end{figure}

\begin{figure}[h!]
\begin{center}
\includegraphics[width=\textwidth]{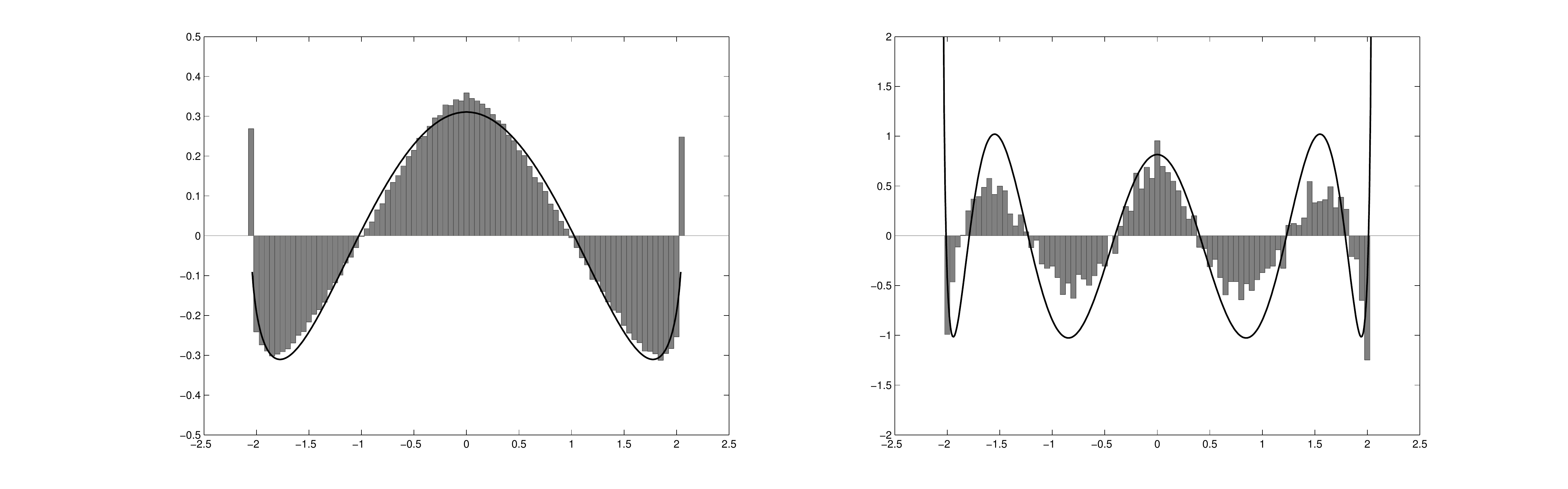}
\caption{\label{rescaled} {\it Left:} Histrogram of $c \left( \mu_n^{c} - \Lambda_{1 + \frac{1}{2c}} (\sigma) \right)$ compared with the density of $\Lambda_{1 + \frac{1}{2c}} \left(\hat \sigma^{\{1\}} \right)$. {\it Right:} Histrogram of $c^2 \left( \mu_n^{c} - \Lambda_{1 + \frac{1}{2c}} \left(\sigma + \frac{1}{c} \hat \sigma^{\{1\}} \right) \right)$ compared with the density of $\Lambda_{1 + \frac{1}{2c}} \left( \hat \sigma^{\{2\}} \right)$.}
\end{center}
\end{figure}

\newpage

\noindent {\bf Acknowledgements:} It is a pleasure for the authors to thank Gérard Ben Arous who initiated our interest in the subject and for fruitful discussions and insightful comments on the present work.

\addcontentsline{toc}{section}{References}
\bibliographystyle{abbrv}
\bibliography{spectre}

\begin{thebibliography}{10}

\bibitem{AL07}
D.~Aldous and R.~Lyons.
\newblock Processes on unimodular random networks.
\newblock {\em Electron. J. Probab.}, 12:no. 54, 1454--1508, 2007.

\bibitem{BS}
I.~Benjamini and O.~Schramm.
\newblock Recurrence of distributional limits of finite planar graphs.
\newblock {\em Electron. J. Probab.}, 6:no. 23, 13 pp. (electronic), 2001.

\bibitem{Bollobas}
B.~Bollob{\'a}s.
\newblock A probabilistic proof of an asymptotic formula for the number of
  labelled regular graphs.
\newblock {\em European J. Combin.}, 1(4):311--316, 1980.

\bibitem{Bobook}
B.~Bollob{\'a}s.
\newblock {\em Random graphs}, volume~73 of {\em Cambridge Studies in Advanced
  Mathematics}.
\newblock Cambridge University Press, Cambridge, second edition, 2001.

\bibitem{BL10}
C.~Bordenave and M.~Lelarge.
\newblock Resolvent of large random graphs.
\newblock {\em Random Structures Algorithms}, 37(3):332--352, 2010.

\bibitem{BLS}
C.~Bordenave, M.~Lelarge, and J.~Salez.
\newblock The rank of diluted random graphs.
\newblock {\em Ann. Probab.}, 39(3):1097--1121, 2011.

\bibitem{BSV}
C.~Bordenave, A.~Sen, and B.~Vir\'ag.
\newblock Mean quantum percolation.
\newblock {\em arXiv:1308.3755}, 2010.

\bibitem{K59}
H.~Kesten.
\newblock Symmetric random walks on groups.
\newblock {\em Trans. Amer. Math. Soc.}, 92:336--354, 1959.

\bibitem{KSV04}
O.~Khorunzhy, M.~Shcherbina, and V.~Vengerovsky.
\newblock Eigenvalue distribution of large weighted random graphs.
\newblock {\em J. Math. Phys.}, 45(4):1648--1672, 2004.

\bibitem{mcKay81}
B.~D. McKay.
\newblock The expected eigenvalue distribution of a large regular graph.
\newblock {\em Linear Algebra Appl.}, 40:203--216, 1981.

\bibitem{Wigner}
E.~P. Wigner.
\newblock On the distribution of the roots of certain symmetric matrices.
\newblock {\em Ann. of Math. (2)}, 67:325--327, 1958.

\bibitem{Z06}
I.~Zakharevich.
\newblock A generalization of {W}igner's law.
\newblock {\em Comm. Math. Phys.}, 268(2):403--414, 2006.

\end{thebibliography}

\bigskip

\noindent \textsc{Nathana\"el Enriquez} \verb|nenriquez@u-paris10.fr|, \\
\textsc{Laurent M\'enard} \verb|laurent.menard@normalesup.org|\\
Universit\'e Paris-Ouest Nanterre\\
Laboratoire Modal'X\\ 200 avenue de la R\'epublique\\
92000 Nanterre (France).

\end{document}